\newtheorem{theorem}{Theorem}[section]
\newtheorem{lemma}[theorem]{Lemma}
\newtheorem{proposition}[theorem]{Proposition}
\newtheorem*{nonumtheorem}{Theorem}
\newtheorem{remark}[theorem]{Remark}
\newtheorem{definition}[theorem]{Definition}
\newcommand{\Z}{\mathbb{Z}}
\newcommand{\R}{\mathbb{R}}
\newcommand{\E}{\mathbb{E}}
\newcommand{\T}{\mathbb{T}}
\begin{document}

\title{A New Proof of Vinogradov's Three Primes Theorem}
\author{Xuancheng Shao}

\address{Department of Mathematics \\ Stanford University \\
450 Serra Mall, Bldg. 380\\ Stanford, CA 94305-2125}
\email{xshao@math.stanford.edu}

\begin{abstract}
We give a new proof of Vinogradov's three primes theorem, which
asserts that all sufficiently large odd positive integers can be
written as the sum of three primes. Existing proofs rely on the
theory of $L$-functions, either explicitly or implicitly. Our proof
uses instead a transference principle, the idea of which was first
developed by Green \cite{green_roth}. To make our argument work, we
also develop an additive combinatorial result concerning popular
sums, which may be of independent interest.
\end{abstract}

\maketitle

\section{Introduction}

In this paper, we study additive problems involving primes. The
famous Goldbach conjecture asserts that every even positive integer
at least $4$ is the sum of two primes. Although the binary Goldbach
problem is considered to be beyond the scope of current techniques,
its ternary analogue has been settled by Vinogradov
\cite{vinogradov} in 1937.

\begin{nonumtheorem}[Vinogradov]
Every sufficiently large odd positive integer can be written as the sum of three primes.
\end{nonumtheorem}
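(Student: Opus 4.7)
The plan is to implement the transference principle strategy foreshadowed in the abstract, in the spirit of Green's work on Roth's theorem in the primes. First I would reduce the problem to a Fourier-analytic statement on the cyclic group $\Z_N$: after normalizing the von Mangoldt function by $\log N$ and applying the W-trick to remove bias from small prime moduli, the number of representations $N = p_1+p_2+p_3$ is, up to positive constants, the triple convolution $(a*a*a)(0)$ on $\Z_N$, where $a = \Lambda_{b,W}/\log N$ is non-negative, supported on residues coprime to $W$, and has mean bounded below by a positive constant. The goal becomes a quantitative lower bound on this convolution.

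The next step is the transference mechanism. I would construct a pseudorandom Selberg-type majorant $\nu \geq a$ satisfying Fourier pseudorandomness of the form $\|\widehat{\nu - 1}\|_\infty = o(1)$, verified via the Selberg sieve and elementary character-sum estimates rather than $L$-functions. Transference then produces a bounded \emph{dense model} $f : \Z_N \to [0,C]$ with the same mean as $a$ and with $\|\widehat{a - f}\|_\infty$ small. At this point the problem splits cleanly into a purely additive-combinatorial question -- lower-bounding the triple convolution $(f*f*f)$ at a specified point for a bounded function $f$ of positive density -- together with a transfer step returning from $f$ back to $a$.

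The main obstacle is precisely this combination, and it is where the ``popular sums'' lemma mentioned in the abstract must enter. A naive Parseval comparison of $(a*a*a)$ with $(f*f*f)$ at a single point would require control of $\|\widehat{a - f}\|_3$, whereas transference supplies only $L^\infty$ control. The way around this is to prove an averaged statement: if $f$ has density $\delta$, then $(f*f*f)(m) \geq c(\delta) > 0$ for every $m$ in a \emph{large popular set} of explicitly bounded density in $\Z_N$. The $L^\infty$ Fourier bound is then enough to transfer this averaged lower bound to $a$, and the W-trick can be arranged so that the residue corresponding to $N$ lies in the popular set, completing the lower bound on $(a*a*a)(0)$. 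I expect the hardest part to be the popular-sums lemma itself: one needs explicit quantitative dependence on $\delta$ strong enough to absorb the losses inherent in transference, and standard Balog--Szemer\'edi--Gowers or arithmetic-regularity arguments alone appear unlikely to suffice at this level of precision -- a more delicate Fourier-analytic or structural argument tailored to triple convolutions will almost certainly be required.
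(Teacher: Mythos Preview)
Your outline captures the high-level architecture correctly --- W-trick, Selberg majorant, pseudorandomness without $L$-functions, transference to a bounded model --- but it misses the central obstruction, and in consequence your ``popular sums'' step is aimed at the wrong target.

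The issue is the \emph{parity barrier}. Any Selberg-type upper-bound sieve for the primes has mean at least twice that of the (normalized) prime indicator, so after transference your bounded model $f$ has density at most $1/2$. Meanwhile, converting a lower bound for $(f*f*f)(0)$ in $\Z_N$ into genuine representations $n_1+n_2+n_3=N$ forces you to restrict supports (say to $[0,2N/3]$) to kill wraparound, and this effectively raises the density threshold for the standard Cauchy--Davenport argument from $1/3$ to $1/2$. Thus the density you actually have is \emph{exactly} the borderline value at which the argument fails; there is no slack for a positive $\delta$. Your proposal does not acknowledge this, and the fix you suggest --- proving $(f*f*f)(m)\geq c(\delta)$ on a large set of $m$ and then ``arranging via the W-trick'' that the target lies there --- cannot work: the W-trick controls residues modulo $W$, not modulo $N$, so it gives no freedom to move the target inside $\Z_N$.

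What the paper actually does is different in kind. It abandons $\Z_N$ and works directly in $\Z$, taking $a_1,a_2$ supported on $[1,N/2]$ and $a_3$ on $[1,N]$. The combinatorial core is not a statement about where $(f*f*f)$ is large, but a Freiman $3k-3$--type lower bound on the \emph{number of popular sums} $|D_{\gamma N}(A_1,A_2)|$: under a mild regularity hypothesis on $A_1$ (many pairs $u,v$ near the ends of $[1,N/2]$ with $v-u$ free of small prime factors), one gets $|D_{\gamma N}(A_1,A_2)|\geq \min(N/2,|A_1|+|A_2|)+|A_2|-o(N)$, which is genuinely stronger than Kneser/Cauchy--Davenport. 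This extra $|A_2|$ is what pushes the effective density threshold down from $1/2$ to $2/5$, and the regularity condition is essential --- without it the statement is false (take $A_i$ to be initial segments). Verifying regularity for the primes is elementary, but you would not have known to look for it from the plan you wrote.
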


The classical approach to Vinogradov's theorem is to use the circle
method. For positive integers $N$, we write $r(N)$, the number of
representations of $N$ as the sum of three primes, as an integral
\[ r(N)=\int_0^1f(\alpha)e(-\alpha N)d\alpha, \]
where $f(\alpha)$ is an exponential sum over primes:
\[ f(\alpha)=\sum_{p\leq N}e(\alpha p). \]
To estimate this integral, dissect $[0,1]$ into major arcs
$\mathfrak{M}$ and minor arcs $\mathfrak{m}$. Roughly speaking, the
major arcs $\mathfrak{M}$ consist of those $\alpha$ that are very
close to a fraction $a/q$ with a small denominator $q$. For these
$\alpha$, we can obtain asymptotics for $f(\alpha)$ using the prime
number theorem in arithmetic progressions modulo $q$. For
$\alpha\in\mathfrak{m}$ in the minor arc, we expect enough
cancellation in the exponential sum so that $f(\alpha)$ is small in
magnitude. Combining the major arc and the minor arc analysis, one
can deduce an asymptotic formula for $r(N)$, with the main term
coming from the major arcs.

In the major arc analysis, we appealed to the prime number theorem
in arithmetic progressions. These arithmetic progressions have
length approximately $N$ and steps up to some power of $\log N$. In
this regime, the prime number theorem in arithmetic progressions is
the Siegel-Walfisz's theorem, whose proof relies on the theory of
$L$-functions (see Chapter 22 of \cite{davenport}). Certain implied
constants in the Siegel-Walfisz's theorem are not effective due to
the possibility of the existence of Siegel zeros. Heath-Brown
\cite{heath-brown} gave a different proof of Vinogradov's theorem by
directly using some identities involving primes, but his method also
requires Siegel-Walfisz's theorem.

Vinogradov's method can be made effective; that is, an explicit
constant $V$ can be obtained from the proof such that any odd
integer $N\geq V$ is the sum of three primes. The current record is
by Liu and Wang \cite{liu-wang}, claiming that $V$ can be taken to
be approximately $\exp(3100)$; see \cite{chen-wang} for the previous
best bound. See also \cite{helfgott} for a recent result. The bound
$V\leq\exp(3100)$ was obtained by incorporating various explicit
estimates for exponential sums and for possible Siegel zeros.

In this paper, we present an alternative proof of Vinogradov's
theorem, which avoids the theory of $L$-functions. In particular, an
explicit bound for $V$ can be extracted from our method by keeping
track of the implied constants. Unfortunately, directly doing so
would produce a bound for $V$ of triple exponential type:
$V\leq\exp(\exp(\exp(C)))$, where $C$ is a reasonable constant. This
is far from the best record (but see Remark \ref{rem:bound}).
Nevertheless, we believe that our new proof is still interesting in
its own right.

Our method uses the idea of the transference principle, first
developed by Green \cite{green_roth} in his proof of Roth's theorem
in the primes. This idea has become a powerful tool for studying
additive problems in dense subsets of primes. In the setting of the
ternary Goldbach problem, we state the following transference
principle obtained by following Green's argument directly (see
\cite{li} and \cite{shao}). For the precise definitions in the
pseudorandomness condition and the $L^q$ extension estimate, see
Definition \ref{def:trans} below.

\begin{theorem}\label{thm:1/2}
Let $0<\delta<1$ be given. Then for sufficiently small $\eta>0$ and
sufficiently large prime $N$, the following statement holds. For
$i=1,2,3$, let $\nu_i,a_i:\Z/N\Z\rightarrow\R$ be arbitrary
functions. Let $\alpha_i$ be the average of $a_i$. Suppose that they
satisfy the following assumptions:
\begin{enumerate}
\item (majorization condition) $0\leq a_i(n)\leq\nu_i(n)$ for all $n\in\Z/N\Z$.
\item (mean condition) $\alpha_i\geq\delta$ and $\alpha_1+\alpha_2+\alpha_3\geq 1+\delta$.
\item (pseudorandomness condition) The majorant $\nu_i$ is
$\eta$-pseudorandom.
\item ($L^q$ extension estimate) The function $a_i$ satisfies the $L^q$ extension estimate for some $2<q<3$.
\end{enumerate}
Then for any $n\in\Z/N\Z$,
\[ \sum_{\substack{n_1,n_2,n_3\\ n_1+n_2+n_3\equiv n\pmod N}}a_1(n_1)a_2(n_2)a_3(n_3)\geq cN^2, \]
where $c=c(\delta)>0$ is a constant depending only on $\delta$.
\end{theorem}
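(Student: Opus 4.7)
The plan is to follow the standard transference strategy by decomposing each $a_i$ as $a_i=a_i^{\#}+a_i^{\flat}$, with $a_i^{\#}$ a bounded ``structured'' part and $a_i^{\flat}=a_i-a_i^{\#}$ a Fourier-uniform ``error'' part. This reduces the original counting problem to two sub-problems: a counting problem for bounded functions (to be handled by a combinatorial lemma), and a Fourier estimate controlling the mixed terms (to be handled via the pseudorandomness and $L^q$ extension hypotheses).

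To construct the decomposition, fix a small parameter $\epsilon=\epsilon(\delta)>0$ to be chosen later, and let $R_i=\{r\in\Z/N\Z:|\widehat{a_i}(r)|\geq\epsilon N\}$ be the $\epsilon$-large spectrum of $a_i$. The $L^q$ extension hypothesis forces $|R_i|\leq C(\epsilon,q)$, since each frequency in $R_i$ contributes at least $\epsilon^q N^q$ to the bounded quantity $\sum_r|\widehat{a_i}(r)|^q$. I would then set $a_i^{\#}=a_i*\mu_{B_i}$, where $\mu_{B_i}$ is the normalised indicator of a regular Bohr set $B_i=B(R_i,\tau)$ of suitably small radius $\tau=\tau(\epsilon,\delta,q)$. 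Using the $\eta$-pseudorandomness of $\nu_i$ together with the majorisation $a_i\leq\nu_i$, one verifies (as in the earlier work of Green, Li, and Shao) that $a_i^{\#}$ is pointwise bounded by $1+o(1)$, has mean $\alpha_i+o(1)$, and satisfies $\|\widehat{a_i^{\flat}}\|_{\infty}\leq\epsilon'$ for some $\epsilon'=\epsilon'(\tau)$ tending to $0$ as $\tau\to 0$.

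Expanding $T(n)=\sum_{n_1+n_2+n_3\equiv n\pmod N}a_1(n_1)a_2(n_2)a_3(n_3)$ into eight terms via $a_i=a_i^{\#}+a_i^{\flat}$, the main term is the triple convolution of the $a_i^{\#}$ alone. Since each $a_i^{\#}$ is bounded by $1+o(1)$ and has mean $\alpha_i+o(1)$ with $\alpha_1+\alpha_2+\alpha_3+o(1)\geq 1+\delta/2$, one applies a popular-sums counting lemma for bounded functions on $\Z/N\Z$ (the additive combinatorial result advertised in the abstract) to obtain
\[
\sum_{n_1+n_2+n_3\equiv n\pmod N}a_1^{\#}(n_1)a_2^{\#}(n_2)a_3^{\#}(n_3)\geq c(\delta)N^2
\]
for every $n\in\Z/N\Z$. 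Each of the seven remaining mixed terms contains at least one factor $a_i^{\flat}$; by Fourier inversion it has the shape $N^{-1}\sum_r\widehat{f_1}(r)\widehat{f_2}(r)\widehat{f_3}(r)e(rn/N)$ with at least one $f_i=a_i^{\flat}$. Pulling out $\|\widehat{a_i^{\flat}}\|_{\infty}\leq\epsilon'$ and applying H\"older with the $\ell^q$ Fourier norm bounds on the other factors (which follow from the $L^q$ extension hypothesis applied to $a_j$ and to $a_j^{\#}=a_j*\mu_{B_j}$) controls each mixed term by $O(\epsilon'^{3-q}N^2)$. Choosing $\tau$ (hence $\epsilon'$) small enough relative to $c(\delta)$ makes these errors $o(c(\delta)N^2)$, completing the argument.

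The main obstacle I expect to encounter is the bounded-function counting lemma used for the main term. The density hypothesis $\sum_i\alpha_i\geq 1+\delta$ sits just above the Cauchy--Davenport threshold guaranteeing $\mathrm{supp}(a_1^{\#})+\mathrm{supp}(a_2^{\#})+\mathrm{supp}(a_3^{\#})=\Z/N\Z$, but it does not by itself force a lower bound of the form $c(\delta)N^2$ on the convolution at every $n$: a naive Fourier calculation yields only $\alpha_1\alpha_2\alpha_3N^2$ minus an error depending on $\max_{r\neq 0}|\widehat{a_i^{\#}}(r)|$, which can be as large as the main term. A genuine combinatorial ``popular sums'' argument, going beyond linear Fourier analysis (possibly via an iterated Pl\"unnecke--Ruzsa or Kneser-type regularisation step), is therefore needed to obtain a uniform-in-$n$ lower bound from only a density hypothesis on bounded functions; this is presumably the novel input of the paper, and once it is in place the remaining transference steps amount to routine Fourier bookkeeping.
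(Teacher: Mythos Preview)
Your overall transference architecture is correct and matches exactly what the paper describes: decompose each $a_i$ into a bounded structured piece (via convolution with a Bohr set built from the large spectrum) plus a Fourier-small piece, control the seven mixed terms by H\"older and the $L^q$ extension estimate, and handle the main term by a combinatorial lemma for bounded functions. This is precisely Green's argument, and the paper does not give a self-contained proof of Theorem~\ref{thm:1/2}; it simply cites \cite{li} and \cite{shao} and summarises the two steps in a single paragraph.

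Where you go slightly astray is in your assessment of the bounded-function step. For Theorem~\ref{thm:1/2} the ambient group is $\Z/N\Z$ with $N$ \emph{prime}, and the mean condition is $\alpha_1+\alpha_2+\alpha_3\geq 1+\delta$. In this setting the ``popular sums'' input needed is nothing more than a quantitative Cauchy--Davenport inequality (essentially Pollard's theorem, or the $G=\Z/N\Z$ case of Lemma~\ref{lem:gr}): for $A_1,A_2\subset\Z/N\Z$ with $N$ prime one has $|D_K(A_1,A_2)|\geq\min(N,|A_1|+|A_2|)-O(\sqrt{KN})$, and combining this with the level-set argument you sketch (passing to $A_i=\{n:a_i^{\#}(n)\geq\xi\}$) gives the uniform-in-$n$ lower bound $c(\delta)N^2$ directly. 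There is no need for an iterated Pl\"unnecke--Ruzsa or regularisation step, and this is \emph{not} the novel combinatorial content of the paper.

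The additive combinatorial result advertised in the abstract is Theorem~\ref{thm:pop}, a Freiman $3k-3$--type bound for popular sums over $\Z$, and it is used only in the proof of Theorem~\ref{thm:2/5}, where one works in $\Z$ rather than $\Z/N\Z$, needs the sharper threshold $\tfrac{1}{2}(\min(1,\alpha_1+\alpha_2)+\alpha_2)+\alpha_3\geq 1+\delta$, and must impose the extra regularity hypothesis on $a_1$. So your diagnosis that a genuine popular-sums argument beyond Cauchy--Davenport is required is correct for Theorem~\ref{thm:2/5}, but for Theorem~\ref{thm:1/2} the bounded case is the easy, classical part.
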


Note that in the case when $\alpha_1=\alpha_2=\alpha_3=\alpha$, the
threshold for the average $\alpha$ is $1/3$.

\begin{remark}\label{rem:1/2}
The conclusion above counts the number of solutions to
$n_1+n_2+n_3\equiv n\pmod N$, while we are interested in solutions
to $n_1+n_2+n_3=n$ in the integers. For $n$ close to $N$, this
discrepancy can be resolved by demanding the function $a_i$ to be
supported in the interval $[0,2N/3]$. In doing so, however, we are
effectively reducing the average of $a_i$ by a factor of $2/3$, and
thus in applications the threshold for the average of $a_i$ is $1/2$
rather than $1/3$.
\end{remark}

The proof of Theorem \ref{thm:1/2} consists of two parts. In the
first part, Fourier analysis techniques are employed to convert the
problem from an arbitrary pseudorandom majorant $\nu_i$ to the case
$\nu_i=1$. This step is where the pseudorandomness condition and the
$L^q$ extension estimate are used. In the second part, the case
$\nu_i=1$ is treated, which follows from a quantitative
Cauchy-Davenport-Chowla inequality.

The transference principle is usually applied in the study of
additive problems involving dense subsets of primes. In such
applications, one can think of $\nu_i$ as the (normalized)
characteristic function of the primes, and $a_i$ as the (normalized)
characteristic function of a dense subset of the primes. The $L^q$
extension estimate for $a_i$ can be established in various ways (see
Remark \ref{rem:Lq} below). The pseudorandomness condition for
$\nu_i$ depends on the equidistribution of primes in arithmetic
progressions; in particular, Siegel-Walfisz theorem.





In this paper, we will choose the majorant $\nu_i$ differently so
that its pseudorandomness can be established elementarily. To prove
Vinogradov's theorem, we attempt to take $a_i$ to be the
(normalized) characteristic function of the primes and $\nu_i$ to be
Selberg's majorant for the primes. Since Selberg's majorant can be
expressed as a (relatively short) sum of standard multiplicative
functions, its pseudorandomness can be proved using elementary
estimates involving these multiplicative functions. Our plan of
deducing Vinogradov's theorem without using the theory of
$L$-functions is to use the transference principle with this new
choice of $a_i$ and $\nu_i$.

However, Theorem \ref{thm:1/2} does not quite serve our purpose. The
parity phenomenon in sieve theory suggests that the mean value of
the majorant $\nu_i$ is necessarily more than twice the mean value
of the characteristic function of the primes. Thus Theorem
\ref{thm:1/2} barely fails to apply to this choice of $a_i$ and
$\nu_i$ (see Remark \ref{rem:1/2}). For an excellent account of
sieve theory including the parity phenomenon, see the book
\cite{opera}.

The main innovation of the current paper is a new version of the
transference principle, which applies even when the average of $a_i$
is slightly less than $1/2$. For the precise definitions in the
pseudorandomness condition, $L^q$ extension estimate, and the
regularity condition, see Definition \ref{def:trans} below.

\begin{theorem}\label{thm:2/5}
Let $0<\delta,\kappa<1$ be given. Then for sufficiently small
$\eta>0$ and sufficiently large positive integer $N$, the following
statement holds. Let $N_3=N$ and $N_1=N_2=\lfloor N/2\rfloor$. For
$i=1,2,3$, let $\nu_i,a_i:[1,N_i]\rightarrow\R$ be arbitrary
functions. Let $\alpha_i$ be the average of $a_i$. Suppose that they
satisfy the following assumptions:
\begin{enumerate}
\item (majorization condition) $0\leq a_i(n)\leq\nu_i(n)$ for all $1\leq n\leq N_i$.
\item (mean condition) $\alpha_i\geq\delta$ and $\tfrac{1}{2}(\min(1,\alpha_1+\alpha_2)+\alpha_2)+\alpha_3\geq 1+\delta$.
\item (pseudorandomness condition) The majorant $\nu_i$ is
$\eta$-pseudorandom.
\item ($L^q$ extension estimate) The function $a_i$ satisfies the $L^q$ extension estimate for some $2<q<3$.
\item (regularity condition for $a_1$) The function $a_1$ is
$(\delta/50,\kappa)$-regular.
\end{enumerate}
Then
\[ \sum_{\substack{n_1,n_2,n_3\\ n_1+n_2+n_3=N}}a_1(n_1)a_2(n_2)a_3(n_3)\geq cN^2, \]
where $c=c(\delta,\kappa)>0$ is a constant depending only on
$\delta$ and $\kappa$.
\end{theorem}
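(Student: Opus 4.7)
The plan is to follow the transference-principle blueprint of Green \cite{green_roth}, feeding it a new combinatorial ``popular sums'' estimate in place of the Cauchy--Davenport--Chowla inequality that drives the proof of Theorem \ref{thm:1/2}. The first two steps would mirror the earlier theorem. First I would embed each $a_i$ (extended by zero) as a function on $\Z/N'\Z$ with $N'$ a prime slightly larger than $N$, so that the constraint $n_1+n_2+n_3=N$ becomes a congruence without wrap-around. Applying the Fourier-analytic dense-model decomposition, one obtains $a_i = a_i^{\flat} + a_i^{\sharp}$, where $a_i^{\flat}$ takes values in $[0,1+o(1)]$, has the same mean and essentially the same support as $a_i$, and where $a_i^{\sharp}$ is Fourier-uniform. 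The pseudorandomness of $\nu_i$ together with the $L^q$ extension estimate then show, exactly as in the proof of Theorem \ref{thm:1/2}, that any triple convolution term involving at least one $a_i^{\sharp}$ contributes $o(N^2)$. I also need to ensure that $a_1^{\flat}$ still satisfies a $(\delta/25,\kappa')$-regularity condition; this should follow from the regularity of $a_1$ by choosing the dense model via local averaging at an intermediate scale.

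After this reduction it suffices to prove
\[
\sum_{n_1+n_2+n_3=N} a_1^{\flat}(n_1)a_2^{\flat}(n_2)a_3^{\flat}(n_3) \geq cN^2
\]
for $0\leq a_i^{\flat}\leq 1$ with means $\alpha_i$ and $a_1^{\flat}$ regular. Setting $F=a_1^{\flat}*a_2^{\flat}$ and $S_\epsilon=\{n\in[2,N]:F(n)\geq\epsilon N\}$, I would rewrite the triple sum as $\sum_{n_3}F(N-n_3)a_3^{\flat}(n_3)\geq \epsilon N\sum_{n_3:\,N-n_3\in S_\epsilon}a_3^{\flat}(n_3)$. Using $0\leq a_3^{\flat}\leq 1$, the inner sum is at least $\alpha_3 N-(N-|S_\epsilon|)=|S_\epsilon|-(1-\alpha_3)N$. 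Combining this with the mean condition, the theorem would follow from a popular-sums density bound
\[
\frac{|S_\epsilon|}{N} \geq \tfrac{1}{2}\bigl(\min(1,\alpha_1+\alpha_2)+\alpha_2\bigr) - o(1)
\]
valid for $\epsilon$ sufficiently small in terms of $\delta$ and $\kappa$.

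The main obstacle is proving this popular-sums density bound; it is the combinatorial heart of the paper and is what brings the effective threshold for the average of $a_i$ from $1/2$ down to $2/5$. Two features distinguish it from a naive Cauchy--Davenport--Chowla application. The $\tfrac{1}{2}$ prefactor reflects the interval geometry: $a_1^{\flat}, a_2^{\flat}$ live in $[1,N/2]$ while their convolution lives in $[2,N]$. The additional $+\alpha_2$ inside the bracket reflects that the region where $F$ is large contains a plateau whose width is controlled by the mass of $a_2^{\flat}$. Regularity of $a_1^{\flat}$ is essential: without it, $a_1^{\flat}$ could concentrate in a short subinterval and drastically shrink the support of $F$. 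I would approach the bound by splitting into the cases $\alpha_1+\alpha_2<1$ and $\alpha_1+\alpha_2\geq 1$, using regularity to approximate $a_1^{\flat}$ by the constant $\alpha_1$ on intervals of length $\geq \kappa N$, and then carrying out a cumulative-sum analysis of $(1_{[1,N/2]}*a_2^{\flat})(n)$ to pin down the plateau region and the two rising/falling flanks on either side of it.
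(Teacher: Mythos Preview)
Your overall architecture---dense-model decomposition, reduction to the bounded case via the $L^q$ extension estimate, then a popular-sums lower bound---matches the paper's, and your reduction in the bounded case to a lower bound on $|S_\epsilon|$ is exactly what the paper does (after passing to essential supports $A_i=\{n:a_i^\flat(n)\geq\xi\}$). The paper carries out the Fourier-analytic step directly in $\Z$ rather than in $\Z/N'\Z$, constructing an interval-Bohr set $B=\{1\le b\le\epsilon N:\|b\theta\|<\epsilon\text{ for all }\theta\in T_\epsilon\}$ and setting $a_i'(n)=\E_{b_1,b_2\in B}a_i(n+b_1-b_2)$; your proposal to embed into $\Z/N'\Z$ is workable, but staying in $\Z$ makes the regularity transfer to $a_1'$ more transparent.

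The genuine gap is in your plan for the popular-sums bound itself. You propose to ``use regularity to approximate $a_1^\flat$ by the constant $\alpha_1$ on intervals of length $\geq\kappa N$'' and then do a cumulative-sum analysis of $1_{[1,N/2]}*a_2^\flat$. But the regularity condition of Definition~\ref{def:trans} does not say anything about equidistribution on intervals: it says only that there are many pairs $(u,v)$ with $u$ small, $v$ large, and $v-u$ coprime to $P(\beta^{-1})$. The coprimality clause is the entire point, and your sketch does not use it. Without it, the example $A_1=A_2=\{\text{odd numbers in }[1,N/2]\}$ (which your interval-averaging heuristic would treat as essentially constant with $\alpha_1=\alpha_2=1/2$) has $A_1+A_2\subset 2\Z$, so the popular-sums set has density at most $1/2$ in $[2,N]$, strictly below the target $\tfrac12(\min(1,\alpha_1+\alpha_2)+\alpha_2)=3/4$.

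The paper's mechanism is different. After passing to essential supports, it uses regularity to locate $u,v\in A_1$ with $d=v-u$ coprime to all primes up to $\beta^{-1}$, so that $\Z/d\Z$ has no proper subgroup of size exceeding $\beta d$. One then projects $A_1,A_2$ into $\Z/d\Z$ and applies Green--Ruzsa's popular-sums version of Kneser's theorem (Lemma~\ref{lem:gr}), which gives $|D_K(B_1,B_2)|\geq\min(d,|B_1|+|B_2|-D)-O(\sqrt{Kd})$ with $D\le\beta d$. Lifting back to $\Z$ and exploiting that both $u+a_2$ and $v+a_2$ are popular (they coincide mod $d$) supplies the extra $+|A_2|$ term. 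This is a Freiman-$3k{-}3$ argument for popular sums, not an interval-averaging argument, and that is the idea your proposal is missing.
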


In the case when $\alpha_1=\alpha_2=\alpha_3=\alpha$, the threshold
for the average $\alpha$ is $2/5$. This threshold is now below $1/2$
and thus Theorem \ref{thm:2/5} can be applied to our new choice of
$a_i$ and $\nu_i$ described above.

Compared with Theorem \ref{thm:1/2}, we now work directly in $\Z$
rather than in $\Z/N\Z$. This requires some modifications to the
traditional argument. First, we no longer have certain tools such as
Bohr sets in the setting of finite abelian groups, which play a
crucial part in the traditional analysis. Second, Theorem
\ref{thm:2/5} becomes nontrivial even in the case $\nu_i=1$, as we
shall discuss in Section \ref{sec:pop}, and the additional
regularity condition is necessary for its truth.

\begin{remark}\label{rem:bound}
The dependence of $\eta$ on $\delta$ in Theorem \ref{thm:1/2} and on
$\delta,\kappa$ in Theorem \ref{thm:2/5} is exponential. In the
application to Roth's theorem in the primes, this causes an extra
layer of logarithm in the lower bound for the density threshold.
However, this extra layer of logarithm was removed by Helfgott and
de Roton \cite{helfgott-deroton}. It is possible that the same thing
can be done here as well.
\end{remark}

The rest of the article is organized as follows. In Section
\ref{sec:pop}, we treat an additive combinatorial problem concerning
popular sums, which could be of independent interest. This problem
stems from the $\nu_i=1$ case of Theorem \ref{thm:2/5}. In Section
\ref{sec:trans}, we combine this additive combinatorial result with
a modification of traditional arguments to prove Theorem
\ref{thm:2/5}. In Section \ref{sec:selberg}, we review the
construction of Selberg's majorant. In Section \ref{sec:random}, we
show that Selberg's majorant is pseudorandom. Finally in Section
\ref{sec:proof}, we deduce Vinogradov's theorem from Theorem
\ref{thm:2/5}.

\vspace{5 mm} \textbf{Acknowledgement.} The author would like to
express his gratitude to Ben Green for sharing with him the idea of
proving Vinogradov's theorem using a transference principle, and
also to his advisor, Kannan Soundararajan, for carefully reading
early drafts of the paper and providing many useful comments.


\section{Generalization of Freiman's $3k-3$ theorem to popular sums}\label{sec:pop}

In this section, we prove a combinatorial result related to the
$\nu_i=1$ case of Theorem \ref{thm:2/5}. Consider the case when
$\nu_i$ is the constant function $1$ and $a_i$ is the characteristic
function of some subset $A_i\subset [1,N_i]$ (Recall that
$N_1=N_2=\lfloor N/2\rfloor$ and $N_3=N$). Theorem \ref{thm:2/5}
claims that if the density of $A_i$ is larger than $2/5$, and if
$A_1$ satisfies some regularity condition, then $N$ can be written,
in many ways, as $a_1+a_2+a_3$ with $a_i\in A_i$. This is certainly
false without the regularity condition: for example, take $A_i$ to
be the set of consecutive integers starting from $1$.

As an important step towards this conclusion, we need to study the
problem of obtaining lower bounds on the number of popular sums in
the sumset $A_1+A_2$. More precisely, for $s\in A_1+A_2$, let $r(s)$
be the number of ways to write $s=a_1+a_2$ with $a_1\in A_1$ and
$a_2\in A_2$. We are interested in lower bounds on the cardinality
of the set
\[ D_K(A_1,A_2)=\{s\in A_1+A_2:r(s)\geq K\}. \]
Note that for $K=1$, $D_1(A_1,A_2)$ is simply the sumset $A_1+A_2$.
However, we are interested in the regime where $K$ is a small
positive constant times the cardinality of $A_1$ or $A_2$.

In this direction, Green and Ruzsa \cite{green_ruzsa} obtained the
following generalization of Kneser's theorem in arbitrary finite
abelian groups.

\begin{lemma}[Green and Ruzsa]\label{lem:gr}
Let $G$ be a finite abelian group. Let $D=D(G)$ be the size of the
largest proper subgroup of $G$. Let $A_1,A_2\subset G$ be subsets
and $K>0$ be a parameter. Suppose that
$\min(|A_1|,|A_2|)\geq\sqrt{K|G|}$. Then
\[ |D_K(A_1,A_2)|\geq \min(|G|,|A_1|+|A_2|-D)-3\sqrt{K|G|}. \]
\end{lemma}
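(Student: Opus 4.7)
The plan is to reduce Lemma~\ref{lem:gr} to Kneser's classical theorem, which asserts that for any finite subsets $B_1, B_2 \subseteq G$,
\[ |B_1 + B_2| \geq \min(|G|, |B_1| + |B_2| - D), \]
since the stabilizer $H$ of $B_1 + B_2$ is either all of $G$ (forcing $B_1 + B_2 = G$) or a proper subgroup of order at most $D$. The strategy is to find subsets $A_i' \subseteq A_i$ with $|A_i \setminus A_i'| \leq \tfrac{3}{2}\sqrt{K|G|}$ such that $A_1' + A_2' \subseteq D_K(A_1, A_2)$; then applying Kneser to $A_1', A_2'$ immediately gives
\[ |D_K| \geq |A_1' + A_2'| \geq \min\bigl(|G|, |A_1'|+|A_2'|-D\bigr) \geq \min\bigl(|G|, |A_1|+|A_2|-D\bigr) - 3\sqrt{K|G|}, \]
which is the claimed bound.

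To extract such $A_i'$, I would work with the bipartite \emph{bad-pair graph} $\mathcal{G}$ on vertex set $A_1 \sqcup A_2$ whose edges are the pairs $(a_1, a_2)$ with $r(a_1 + a_2) < K$. The total edge count is $\sum_{s \notin D_K} r(s) < K|G|$. Moreover, these edges decompose into matchings $M_s = \{(a_1, s-a_1): a_1 \in A_1,\, s-a_1 \in A_2\}$, one per unpopular sum $s$, each of size $r(s) < K$. Finding subsets $A_i'$ with the required properties is equivalent to producing a vertex cover of $\mathcal{G}$ of total size at most $3\sqrt{K|G|}$.

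The principal obstacle will be precisely this vertex-cover bound. The naive application of K\"onig's theorem, using only the total edge count, yields a cover of order $K|G|$, which is far too large by a factor of $\sqrt{K|G|}$. To do better, the proof must exploit two additional structural features simultaneously: the matching decomposition of the edges of $\mathcal{G}$, and the density hypothesis $\min(|A_1|, |A_2|) \geq \sqrt{K|G|}$. One plausible approach is a Cauchy--Schwarz-type double-counting of degree sequences in $\mathcal{G}$ combined with an iterative pruning of high-degree vertices, where the density condition ensures the pruning terminates after only $O(\sqrt{K|G|})$ removals per side. An alternative is to adapt the Dyson $e$-transform (central to one proof of Kneser's theorem) to popular sums, compressing $A_1, A_2$ in a way that preserves both $D_K$ and the sum $|A_1| + |A_2|$; this is essentially the route taken in \cite{green_ruzsa}. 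Once the extraction is carried out, combining it with Kneser as above completes the proof.
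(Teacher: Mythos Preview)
First, note that the paper does not prove Lemma~\ref{lem:gr} at all; it is quoted from Green and Ruzsa \cite{green_ruzsa} and used as a black box in the proof of Theorem~\ref{thm:pop}. So there is no in-paper argument to compare your proposal against.

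As for the proposal itself, it is a plan rather than a proof, and the step you yourself flag as the ``principal obstacle'' is a genuine gap that your two suggested routes do not close.

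\textbf{The pruning idea does not yield a vertex cover.} Removing from $A_1$ every vertex whose bad-degree exceeds $t$ costs at most $K|G|/t$ vertices; with $t=\sqrt{K|G|}$ you discard at most $\sqrt{K|G|}$ elements of $A_1$. But each surviving $a_1'\in A_1'$ may still have up to $\sqrt{K|G|}$ bad partners in $A_2$, so $A_1'+A_2\not\subseteq D_K$ in general. This one-sided pruning is precisely what the paper uses in the proof of Theorem~\ref{thm:pop}, and it suffices there only because that argument needs just two well-chosen elements $u,v\in A_1'$, not the full sumset $A_1'+A_2'$. Iterating the pruning on the other side does not help: after trimming $A_2$ similarly, elements of $A_1'$ can still have bad partners among the survivors in $A_2'$. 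No Cauchy--Schwarz bookkeeping on degree sequences converts this into the two-sided statement you need.

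\textbf{The vertex-cover reformulation is strictly stronger than the lemma.} You are asking for large $A_i'\subseteq A_i$ with $A_1'+A_2'\subseteq D_K(A_1,A_2)$; the lemma only asserts $|D_K|$ is large, not that $D_K$ contains such a sumset. I do not see a structural reason (from the matching decomposition or the density hypothesis) forcing a vertex cover of size $3\sqrt{K|G|}$ in general.

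\textbf{The $e$-transform route does not go through your reformulation.} Green and Ruzsa use the $e$-transform to prove a Pollard--Kneser inequality of the form
\[
\sum_{s\in G}\min(r(s),t)\;\geq\; t\,\min\bigl(|G|,\,|A_1|+|A_2|-D\bigr)
\]
for $1\le t\le \min(|A_1|,|A_2|)$, and then extract the bound on $|D_K|$ by the elementary estimate $\sum_s\min(r(s),t)\le t|D_K|+K|G|$ and optimising at $t=\sqrt{K|G|}$ (this is exactly where the hypothesis $\min(|A_1|,|A_2|)\ge\sqrt{K|G|}$ enters). No subsets $A_i'$ are produced anywhere in that argument. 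So while you correctly name the method, adapting it to manufacture your vertex cover would be an additional, unproved step.

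In short: the reduction to Kneser is sound \emph{conditional} on the extraction of $A_i'$, but that extraction is the entire content of the lemma, and neither of your sketches supplies it.
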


When $G$ is a cyclic group, this is almost sharp when $A_1$ and
$A_2$ are arithmetic progressions of the same step. For our
purposes, we would like better bounds once these extreme cases are
excluded. For $A_1,A_2\subset\Z$, Freiman \cite{freiman} has shown
that the lower bound for $|A_1+A_2|=D_1(A_1,A_2)$ can be improved if
the diameters of $A_1$ and $A_2$ are large compared to $|A_1|$ and
$|A_2|$. For $A\subset\Z$, we define the diameter of $A$ to be the
smallest $d$ such that $A$ is contained in an arithmetic progression
of length $d$.

\begin{theorem}[Freiman]\label{thm:3k-3}
Let $A_1,A_2\subset\Z$ be finite sets with diameters $d_1,d_2$, respectively. Suppose that $d_1\leq d_2$. Then
\[ |A_1+A_2|\geq\min(|A_1|+d_2,2|A_1|+|A_2|-3). \]
\end{theorem}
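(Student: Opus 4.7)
The plan is to exhibit explicit chains of distinct sums in $A_1 + A_2$ and carry out a case analysis matching the two branches of the minimum. By translating, assume $\min A_1 = \min A_2 = 0$, and write $A_1 = \{0 = a_0 < a_1 < \cdots < a_{k-1} = d_1\}$ and $A_2 = \{0 = b_0 < b_1 < \cdots < b_{\ell-1} = d_2\}$ with $k = |A_1|$, $\ell = |A_2|$. Two strictly increasing ``diagonal'' chains in $A_1 + A_2$, each of length $k + \ell - 1$, are
\[
\mathcal{C}_1 = \{a_i : 0 \leq i < k\} \cup \{d_1 + b_j : 1 \leq j < \ell\}, \quad \mathcal{C}_2 = \{b_j : 0 \leq j < \ell\} \cup \{a_i + d_2 : 1 \leq i < k\},
\]
immediately giving the Cauchy--Davenport-type bound $|A_1 + A_2| \geq k + \ell - 1$. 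The task is to strengthen this either by $k - 2$ (in one regime) or by $d_2 - \ell + 1$ (in the other).

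For the case $d_2 \geq k + \ell - 3$, targeting $|A_1 + A_2| \geq 2k + \ell - 3$: when $d_1 < d_2$, the ``wings'' $A_1 \subseteq [0, d_1]$ and $d_2 + A_1 \subseteq [d_2, d_1+d_2]$ are disjoint subsets of $A_1 + A_2$, supplying $2k$ elements. The intermediate region $(d_1, d_2)$ is then populated by the elements $b_j$ (with $1 \leq j \leq \ell - 2$, realized as $a_0 + b_j$) and by auxiliary ``inner'' sums $a_i + b_j$ with $1 \leq i \leq k - 2$ and $1 \leq j \leq \ell - 2$; a careful count yields at least $\ell - 3$ new elements beyond the wings. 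The boundary case $d_1 = d_2$ costs a few extra overlaps at the common endpoint, but the same scheme still delivers the bound.

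For the case $d_2 < k + \ell - 3$ (so $\ell \geq d_2 - k + 4$ and $A_2$ is relatively dense in $[0, d_2]$), targeting $|A_1 + A_2| \geq k + d_2$: the key sub-claim is $[d_1, d_2] \subseteq A_1 + A_2$, proved by pigeonhole---if $x \in [d_1, d_2]$ were missing, then the $k$-element set $x - A_1 \subseteq [0, d_2]$ would be disjoint from $A_2$, forcing $\ell \leq d_2 + 1 - k$ and contradicting the density hypothesis. A refined pigeonhole argument of the same flavor extends the conclusion to sufficiently many $x \in [0, d_1)$ and $x \in (d_2, d_1 + d_2]$; combining with $A_1$ and $d_2 + A_1$ then yields the bound $k + d_2$.

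The main obstacle I anticipate is the overlap bookkeeping in the first case---particularly in the extremal configuration $A_1 = A_2 = \{0, 1, \ldots, k-2, d\}$ with $d$ large, where both diagonal chains coincide and the required extra elements must be extracted entirely from the ``inner'' sums $a_i + b_j$ with $1 \leq i, j \leq k - 2$---and the refined pigeonhole accounting in the second case when $A_1$ is not itself an interval. An inductive scheme, shrinking $A_2$ by removing an extremal element (e.g.\ $b_1$ or $b_{\ell - 2}$) and verifying that the sumset loses at least one element, may be the cleanest organizational device for handling both regimes uniformly.
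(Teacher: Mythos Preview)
The paper does not prove Theorem~\ref{thm:3k-3}; it is quoted as a known result of Freiman, with the asymmetric two-set version attributed to Lev and Smeliansky~\cite{lev}, and no argument is given. The paper does remark that its proof of the related popular-sums result (Theorem~\ref{thm:pop}) is ``motivated by Lev and Smeliansky's proof'' of the present statement, and that method is quite different from yours: one projects $A_1$ and $A_2$ to the cyclic group $\Z/d\Z$ for a suitably chosen $d$, applies Kneser's theorem (or Cauchy--Davenport) there, and lifts back to $\Z$, keeping track of how many times each residue class is hit. Your approach---explicit chains plus pigeonhole directly in $\Z$---is closer in spirit to Freiman's original argument.

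That said, your proposal is avowedly a sketch, and the gaps you flag are genuine. In your first case, the line ``a careful count yields at least $\ell-3$ new elements beyond the wings'' is precisely the content of the theorem in that regime: nothing you have written forces the interior $b_j$'s to be numerous, since $A_2$ may cluster near its endpoints. In your second case the sub-claim $[d_1,d_2]\subseteq A_1+A_2$ is correct and cleanly argued, but combining it with the wings $A_1$ and $d_2+A_1$ yields only $2k+d_2-d_1-1$ distinct sums, which falls short of the target $k+d_2$ by exactly $d_1+1-k$ whenever $A_1$ is not a full interval; recovering those missing elements by ``a refined pigeonhole argument of the same flavor'' is not automatic. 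The inductive device you mention at the end (delete an extremal element and verify the sumset shrinks) is indeed the standard way to make a direct argument of this type go through, but as it stands the proposal does not carry it out.
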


When $A_1=A_2=A$ and $|A|=k$, the lower bound above reads $|A+A|\geq
3k-3$ if the diameter of $A$ is large. For this reason, it is
traditionally called Freiman's $3k-3$ theorem.

Our main result in this section is a generalization of Theorem
\ref{thm:3k-3} to popular sums, which essentially states that the
same lower bound above holds for $D_K(A_1,A_2)$ when $K=\gamma N$
for some small $\gamma>0$, under some regularity assumption on
$A_1$. Before stating the result, we first describe this regularity
condition. For $y\geq 2$, let $P(y)$ be the product of all primes up
to $y$.

\begin{definition}\label{def:reg}
Let $0<\beta,\kappa<1$ be parameters. A subset $A\subset [1,N]$ is
said to be $(\beta,\kappa)$-regular if \[ |\{(u,v)\in A\times
A:u\leq\beta N,v\geq (1-\beta)N,(v-u,P(\beta^{-1}))=1\}|\geq \kappa
N^2.
\]
\end{definition}

Roughly speaking, this regularity condition on $A$ ensures that the
diameter of $A$ is approximately $N$, even if a small proportion of
elements are removed from $A$. This definition is compatible with
the $(\beta,\kappa)$-regularity of the characteristic function of
$A$ (see Definition \ref{def:trans} below). We now state our main
result in this section.

\begin{theorem}\label{thm:pop}
Let $\beta,\kappa>0$ be parameters with $\beta<1/6$. Let
$A_1,A_2\subset [1,N]$ be arbitrary subsets with $|A_i|\geq 4\beta
N$ ($i=1,2$). Suppose that $A_1$ is $(\beta,\kappa)$-regular. Then
for $\gamma<\min(\kappa^2/(16\beta^2),\beta^2/16)$,
\[ |D_{\gamma N}(A_1,A_2)|\geq \min(N,|A_1|+|A_2|)+|A_2|-9\beta N. \]
\end{theorem}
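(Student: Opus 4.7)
The plan is to adapt the proof of Freiman's $3k-3$ theorem (Theorem \ref{thm:3k-3}) to the setting of popular sums, using the $(\beta,\kappa)$-regularity of $A_1$ both to ensure that $A_1$ has effective diameter close to $N$ and to prevent $A_1$ from being concentrated in a short arithmetic progression of small common difference. First I would extract the combinatorial content of the regularity condition: setting $U=A_1\cap[1,\beta N]$ and $V=A_1\cap[(1-\beta)N,N]$, the regularity hypothesis supplies at least $\kappa N^2$ pairs $(u,v)\in U\times V$ with $\gcd(v-u,P(\beta^{-1}))=1$. Since $|U|,|V|\le\beta N$, a trivial count then forces $|U|,|V|\ge\kappa N/\beta\ge\kappa N$, so in particular $\mathrm{diam}(A_1)\ge(1-2\beta)N$; the coprimality constraint rules out the degenerate case in which $A_1$ sits inside an arithmetic progression of common difference below $\beta^{-1}$, which would otherwise be the extremal case of the Freiman-type argument.

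With this structure in hand I would follow the scheme of Theorem \ref{thm:3k-3} with $A_1$ in the role of the set with the larger diameter. Since $\mathrm{diam}(A_1)\ge(1-2\beta)N$, the swapped form of that theorem yields
\[ |A_1+A_2|\ge\min(N+|A_2|,|A_1|+2|A_2|)-O(\beta N), \]
which already matches the claimed lower bound for $|D_{\gamma N}(A_1,A_2)|$ up to the $9\beta N$ slack; indeed one checks $\min(N,|A_1|+|A_2|)+|A_2|=\min(N+|A_2|,|A_1|+2|A_2|)$. The $O(\beta N)$ loss reflects the $(1-2\beta)N$ lower bound on the diameter rather than the full $N$.

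The delicate and novel step is to upgrade this sumset bound to a popular-sum bound, that is, to show that at most $O(\beta N)$ elements of $A_1+A_2$ fail to be $\gamma N$-popular. For this I would exploit the spanning pairs $(u,v)$ supplied by regularity: each such pair contributes a distinct representation of every common element of $u+A_2$ and $v+A_2$, and the coprimality of $v-u$ to $P(\beta^{-1})$ keeps these representations genuinely distinct even when $A_1$ has residual arithmetic structure modulo small primes. Combined with the trivial budget $\sum_{s\notin D_{\gamma N}}r(s)<\gamma N\cdot|A_1+A_2|\le 2\gamma N^2$ and a Cauchy--Schwarz step, the quantitative constraints $\gamma<\kappa^2/(16\beta^2)$ and $\gamma<\beta^2/16$ should force the number of non-popular sums to be $O(\beta N)$; the factor $\kappa^2/\beta^2$ in the threshold reflects the $L^2$ loss in converting the spanning-pair count into a popularity lower bound, while $\beta^2/16$ arises from the bulk comparison $\gamma N\cdot|A_1+A_2|\ll|A_1||A_2|$.

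The main obstacle is precisely this last step. Theorem \ref{thm:3k-3} controls only the size of $A_1+A_2$, while the applicability to the transference principle hinges on the popularity threshold $\gamma N$. Without the coprimality condition baked into the regularity, $A_1+A_2$ could be dominated by a handful of heavy sums in the middle flanked by many low-multiplicity sums at the ends, and the conclusion would fail; exploiting the coprimality quantitatively rather than qualitatively is the heart of the argument.
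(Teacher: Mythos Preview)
Your proposal has a genuine gap at the ``upgrade'' step, and the approach differs substantially from the paper's.

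You propose to first bound $|A_1+A_2|$ via Freiman's $3k-3$ theorem and then argue that at most $O(\beta N)$ sums fail to be $\gamma N$-popular. But the tools you invoke cannot do this. The budget $\sum_{s\notin D_{\gamma N}} r(s)\le \gamma N\cdot|A_1+A_2|\le 2\gamma N^2$ is an \emph{upper} bound on the non-popular mass; combined with the trivial $r(s)\le\min(|A_1|,|A_2|)$ it yields only $|D_{\gamma N}|\ge \max(|A_1|,|A_2|)-O(\gamma N^2/\min|A_i|)$, which is far short of $\min(N,|A_1|+|A_2|)+|A_2|-O(\beta N)$. No Cauchy--Schwarz step recovers the missing summands: nothing you have written forces a typical sum $s\in A_1+A_2$ to have $r(s)$ bounded \emph{below} by anything better than $1$. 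Your claim that each spanning pair $(u,v)$ ``contributes a distinct representation of every common element of $u+A_2$ and $v+A_2$'' is not correct as stated (these are translates of $A_2$, not representations of a common sum), and the role you assign to coprimality remains purely heuristic.

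The paper avoids this upgrade entirely by working with popular sums from the outset. It first passes to the set $A_1'\subset A_1$ of ``high-degree'' elements, namely those $a_1$ for which $a_1+a_2\in D_{\gamma N}(A_1,A_2)$ for all but $\sqrt{\gamma}N$ values of $a_2\in A_2$; the budget above shows $|A_1\setminus A_1'|\le 2\sqrt{\gamma}N$. The regularity hypothesis, together with $\gamma<\kappa^2/(16\beta^2)$, then guarantees a \emph{single} pair $(u,v)\in A_1'\times A_1'$ with $u\le\beta N$, $v\ge(1-\beta)N$, and $d:=v-u$ coprime to $P(\beta^{-1})$. One then projects $A_1''=A_1'\cap[u,v]$ and $A_2$ into $\mathbb{Z}/d\mathbb{Z}$ and applies the Green--Ruzsa lemma (Lemma~\ref{lem:gr}) directly to the popular sumset $D_{3\gamma N}(B_1,B_2)$ there. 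Coprimality is used precisely to bound the largest proper subgroup of $\mathbb{Z}/d\mathbb{Z}$ by $\beta N$, so that the subgroup loss in Green--Ruzsa is harmless. The extra $|A_2|$ in the final bound comes from the lifting step: since $u,v\in A_1'$, for almost every $a_2\in A_2$ both $u+a_2$ and $v+a_2$ lie in $D_{\gamma N}(A_1,A_2)$, and they are congruent modulo $d$, so each popular residue in $\mathbb{Z}/d\mathbb{Z}$ lifts to (at least) two popular integer sums. This is the mechanism that produces the term $\min(N,|A_1|+|A_2|)+|A_2|$ rather than merely $\min(N,|A_1|+|A_2|)$, and it has no analogue in your outline.
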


Our argument is motivated by Lev and Smelianski's proof \cite{lev}
of Theorem \ref{thm:3k-3}. We embed the sets $A_1$ and $A_2$ in an
appropriately chosen cyclic group and then use Lemma \ref{lem:gr}.



\begin{proof}
Consider the bipartite graph $\Gamma=(A_1,A_2,E)$, whose vertices
are elements of $A_1$ and $A_2$, and whose edges are those pairs
$(a_1,a_2)$ ($a_1\in A_1,a_2\in A_2$) with $a_1+a_2\in D_{\gamma
N}(A_1,A_2)$. Since every element $s\in (A_1+A_2)\setminus D_{\gamma
N}(A_1,A_2)$ yields at most $\gamma N$ edges in the complement of
$\Gamma$, the edge set $E$ contains all but at most $\gamma N\cdot
|A_1+A_2|\leq 2\gamma N^2$ pairs.

Let $A_1'\subset A_1$ be the set of vertices in $A_1$ with degree at least $|A_2|-\sqrt{\gamma}N$. Then
\[ |A_1\setminus A_1'|\leq\frac{2\gamma N^2}{\sqrt{\gamma}N}\leq 2\sqrt{\gamma}N. \]

By hypothesis, there are at least $\kappa N^2$ pairs $(u,v)\in
A_1\times A_1$ with $u\leq\beta N$ and $v\geq (1-\beta)N$ such that
$(v-u,P(\beta^{-1}))=1$. The number of those pairs with either
$u\notin A_1'$ or $v\notin A_1'$ is bounded above by
$4\beta\sqrt{\gamma}N^2$, which is less than $\kappa N^2$ by the
choice of $\gamma$. Hence there exists such a pair with $u,v\in
A_1'$. Let $A_1''=A_1'\cap [u,v]$. Then
\[ |A_1''|\geq |A_1'|-2\beta N\geq |A_1|-2(\beta+\sqrt{\gamma})N. \]

Let $d=v-u$ be the difference between the largest and the smallest
elements of $A_1''$. Let $B_1,B_2$ be the images of $A_1'',A_2$,
respectively, under the projection map $\Z\rightarrow\Z/d\Z$. Then
$|B_1|=|A_1''|-1$ and $|B_2|\geq |A_2|-2\beta N$. We claim that
\begin{equation}\label{embed}
|D_{\gamma N}(A_1'',A_2)|\geq |D_{3\gamma
N}(B_1,B_2)|+(|A_2|-2\sqrt{\gamma}N). \end{equation} In fact, for
each popular sum $\bar{s}\in D_{3\gamma N}(B_1,B_2)\subset\Z/d\Z$,
there are at most three different ways to lift $\bar{s}$ to an
integer $s\in A_1''+A_2$ (since $\beta<1/6$ and thus $d>2N/3$). At
least one of those liftings lies in $D_{\gamma N}(A_1'',A_2)$. The
additional term $|A_2|-2\sqrt{\gamma}N$ accounts for the fact that,
for all but at most $2\sqrt{\gamma}N$ values of $a_2\in A_2$, both
sums $u+a_2$ and $v+a_2$ lie in $D_{\gamma N}(A_1'',A_2)$, but they
are the same modulo $d$.

It is easy to check that $|B_i|\geq\sqrt{3\gamma}N$. We may thus
apply Lemma \ref{lem:gr} to the sets $B_1,B_2$ inside $G=\Z/d\Z$ to
conclude that
\[ |D_{3\gamma N}(B_1,B_2)|\geq\min(d,|B_1|+|B_2|-D)-6\sqrt{\gamma}N, \]
where $D=D(\Z/d\Z)$ is the size of the largest subgroup of $\Z/d\Z$.
It follows from $(d,P(\beta^{-1}))=1$ that $D\leq\beta N$. Combining
this with the lower bounds for $|B_1|$, $|B_2|$, and $d$, we get
\[ |D_{3\gamma N}(B_1,B_2)|\geq\min(N,|A_1|+|A_2|)-(6\beta+8\sqrt{\gamma})N. \]
Hence by \eqref{embed},
\[ |D_{\gamma N}(A_1,A_2)|\geq |D_{\gamma N}(A_1'',A_2)|\geq\min(N,|A_1|+|A_2|)+|A_2|-(6\beta+10\sqrt{\gamma})\beta N. \]
This is enough to conclude the proof by the choice of $\gamma$.
\end{proof}

\begin{remark}
A central topic in additive combinatorics is the study of structures
of sets with small doubling. For $A\subset\Z$, the doubling of $A$
is the quantity $K=|A+A|/|A|$. Freiman's celebrated theorem gives a
classification of the sets with small doubling $K$: they are dense
subsets of generalized arithmetic progressions of rank at most $K$.
See \cite{tao-vu} for the precise result and its history. Theorems
\ref{thm:3k-3} and \ref{thm:pop} roughly states that, if $K<3$, then
$A$ is efficiently covered by an arithmetic progression. This gives
a more precise structure than Freiman's theorem when $K<3$. In the
wider region $K<4$, see \cite{eberhard-green-manners} for a recent
result.
\end{remark}







\section{The transference principle}\label{sec:trans}

In this section, we prove Theorem \ref{thm:2/5}. The precise
definitions of the pseudorandomness condition, $L^q$ extension
estimate, and the regularity condition are given as follows. For a
(compactly supported) function $f:\Z\rightarrow\R$, its Fourier
transform is defined by
\[ \hat{f}(\theta)=\sum_{n\in\Z}f(n)e(n\theta), \]
where $e(n\theta)=\exp(2\pi in\theta)$. The $L^q$ norm of its
Fourier transform is defined by
\[
\|\hat{f}\|_q=\left(\int_0^1|\hat{f}(\theta)|^qd\theta\right)^{1/q}.
\]
For $y\geq 2$, let $P(y)$ be the product of all primes up to $y$.

\begin{definition}\label{def:trans}
Let $f:[1,N]\rightarrow\R$ be an arbitrary function.
\begin{enumerate}
\item The function $f$ is said to be $\eta$-\textit{pseudorandom} if
$|\hat{f}(r/N)-\delta_{r,0}N|\leq\eta N$ for each $r\in\Z/N\Z$,
where $\delta_{r,0}$ is the Kronecker delta.
\item The function $f$ is said to satisfy the $L^q$ extension
estimate if $\|\hat{f}\|_q\ll_q N^{1-1/q}$, where the implied
constant depends only on $q$.
\item The function $f$ is said to be $(\beta,\kappa)$-regular if
\[ \sum_{(u,v)\in M}f(u)f(v)\geq\kappa N^2, \]
where
\[ M=\{(u,v):u\leq\beta N,v\geq (1-\beta)N,(v-u,P(\beta^{-1}))=1\}. \]
\end{enumerate}
\end{definition}

Note that when $f$ is the characteristic function of a subset
$A\subset [1,N]$, $(\beta,\kappa)$-regularity of $f$ is equivalent
to $(\beta,\kappa)$-regularity of $A$ (recall Definition
\ref{def:reg}).

The proof of Theorem \ref{thm:2/5} is similar as the arguments in
\cite{green_roth} and \cite{green_restriction}, but with some new
ingredients. In the treatment of the case $\nu_i=1$, we use Theorem
\ref{thm:pop} established in the previous section. In the reduction
from arbitrary $\nu_i$ to the case $\nu_i=1$, we work directly in
$\Z$ rather than in $\Z/N\Z$.

\subsection{Proof of Theorem \ref{thm:2/5} in the case $\nu_i=1$}

For clarity, we restate Theorem \ref{thm:2/5} in the case $\nu_i=1$
as the following lemma.

\begin{lemma}\label{lem:trans}
Let $0<\delta,\kappa<1$ be given. Let $N$ be a sufficiently large
positive integer. Let $N_3=N$ and $N_1=N_2=\lfloor N/2\rfloor$. For
$i=1,2,3$, let $a_i:[1,N_i]\rightarrow [0,1]$ be an arbitrary
function and let $\alpha_i$ be the average of $a_i$. Suppose that
they satisfy the following assumptions:
\begin{enumerate}
\item (mean condition) $\alpha_i\geq\delta$ and $\tfrac{1}{2}(\min(1,\alpha_1+\alpha_2)+\alpha_2)+\alpha_3\geq 1+\delta$.
\item (regularity condition for $a_1$) The function $a_1$ is
$(\delta/50,\kappa)$-regular.
\end{enumerate}
Then
\[ \sum_{\substack{n_1,n_2,n_3\\ n_1+n_2+n_3=N}}a_1(n_1)a_2(n_2)a_3(n_3)\geq c N^2, \]
where $c=c(\delta,\kappa)>0$ is a constant depending only on $\delta$ and $\kappa$.
\end{lemma}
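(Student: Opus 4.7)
The plan is to reduce to the case where $a_1, a_2, a_3$ are indicator functions of sets, and then invoke Theorem \ref{thm:pop} to locate many popular sums $s \in A_1 + A_2$ whose complement $N - s$ lies in $A_3$.

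First I would pass to the indicator case by level-set truncation. Set $c_0 := \tfrac{1}{10}\min(\delta, \kappa)$ and define $A_i := \{n \in [1, N_i] : a_i(n) \geq c_0\}$. The pointwise inequality $a_i \geq c_0 \mathbf{1}_{A_i}$ reduces the problem, at the cost of a multiplicative factor $c_0^3$, to counting triples $(n_1, n_2, n_3) \in A_1 \times A_2 \times A_3$ with $n_1 + n_2 + n_3 = N$. Since $a_i \leq 1$, one checks $\alpha_i' := |A_i|/N_i \geq \alpha_i - c_0$, and a short calculation yields
\[ \tfrac{1}{2}\bigl(\min(1, \alpha_1' + \alpha_2') + \alpha_2'\bigr) + \alpha_3' \geq 1 + \delta - \tfrac{5c_0}{2} \geq 1 + \tfrac{\delta}{2}. \]
Splitting the regularity sum $\sum_M a_1(u) a_1(v)$ by whether $a_1(u), a_1(v) \geq c_0$ similarly gives $|M \cap (A_1 \times A_1)| \geq (\kappa - 2c_0) N_1^2$, so that $A_1$ is $(\delta/50, \kappa/2)$-regular in the sense of Definition \ref{def:reg}.

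Next, I apply Theorem \ref{thm:pop} to the subsets $A_1, A_2 \subset [1, N_1]$, with parameters $\beta = \delta/50$ and regularity constant $\kappa/2$, choosing $\gamma = \gamma(\delta, \kappa) > 0$ small enough to meet the hypothesis. Writing $K := \gamma N_1$, this produces a popular-sum set $D_K := D_K(A_1, A_2) \subset [2, N]$ of size at least $\min(N_1, |A_1| + |A_2|) + |A_2| - 9\beta N_1$. Since both $D_K$ and $N - A_3$ lie in the interval $[0, N]$, inclusion-exclusion gives
\[ |D_K \cap (N - A_3)| \geq |D_K| + |A_3| - N - 1 \geq N\Bigl[\tfrac{1}{2}(\min(1, \alpha_1' + \alpha_2') + \alpha_2') + \alpha_3' - 1 - \tfrac{9\beta}{2}\Bigr] - O(1). \]
Using the bound from the previous paragraph together with $c_0 \leq \delta/10$ and $\beta = \delta/50$, the bracketed quantity is at least $\delta/2 - 9\delta/100 > \delta/3$, so $|D_K \cap (N - A_3)| \gg_\delta N$.

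Finally, for each $s \in D_K \cap (N - A_3)$ there are at least $K = \gamma N_1$ pairs $(n_1, n_2) \in A_1 \times A_2$ with $n_1 + n_2 = s$, and $a_3(N - s) \geq c_0$, so
\[ \sum_{n_1 + n_2 + n_3 = N} a_1(n_1) a_2(n_2) a_3(n_3) \geq c_0^3 \cdot K \cdot |D_K \cap (N - A_3)| \gg_{\delta, \kappa} N^2, \]
as desired. I do not anticipate a serious obstacle: the hard combinatorial content is supplied by Theorem \ref{thm:pop}, and the only bookkeeping that requires care is in the third step, where the truncation loss $O(c_0)$ and the Freiman loss $9\beta/2$ must be jointly dominated by the gap $\delta$ in the mean condition. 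The choices $c_0 = \tfrac{1}{10}\min(\delta,\kappa)$ and $\beta = \delta/50$ are calibrated precisely to make this work.
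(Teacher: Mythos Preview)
Your proposal is correct and follows essentially the same route as the paper's own proof: pass to level sets $A_i=\{a_i\geq\xi\}$ (the paper's $\xi$ plays the role of your $c_0$), verify that $A_1$ inherits $(\delta/50,\kappa/2)$-regularity, apply Theorem~\ref{thm:pop} to $A_1,A_2\subset[1,N_1]$, and then use inclusion--exclusion in $[1,N]$ to intersect $D_{\gamma N_1}(A_1,A_2)$ with $N-A_3$. The constants differ in inessential ways, and your bookkeeping in the final step (checking that the $9\beta/2$ loss and the $5c_0/2$ truncation loss are absorbed by $\delta$) is slightly more explicit than the paper's, but there is no substantive difference in strategy.
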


\begin{proof}
Let $\xi>0$ be a small parameter to be chosen later. Let $A_i\subset
[1,N_i]$ be the essential support of $a_i$:
\[ A_i=\{1\leq n\leq N_i:a_i(n)\geq\xi\}, \]
Then
\[ |A_i|>(\alpha_i-\xi)N_i. \]
Write $\beta=\delta/50$. It follows from the regularity condition
for $a_1$ that
\[ |\{(u,v)\in A_1\times A_1:u\leq\beta N,v\geq (1-\beta)N,(v-u,P(\beta^{-1}))=1\}|\geq (\kappa-\xi^2\beta^2) N^2\geq\tfrac{1}{2}\kappa N^2 \]
if $\xi$ is chosen small enough. Hence $A_1$ is
$(\beta,\kappa/2)$-regular. By Theorem \ref{thm:pop}, there exists
$\gamma=\gamma(\delta,\kappa)>0$,
\[  |D_{\gamma N_1}(A_1,A_2)|\geq\min(N_1,|A_1|+|A_2|)+|A_2|-\tfrac{1}{2}\delta N_1\geq (\min(1,\alpha_1+\alpha_2)+\alpha_2-\delta)N_1. \]
Note that $D_{\gamma N_1}(A_1,A_2)$ and $A_3$ are both subsets of
$[1,N]$, and their densities in $[1,N]$ add up to at least
$1+\delta/4$ by the mean condition. Hence
\[ |D_{\gamma N_1}(A_1,A_2)\cap (N-A_3)|\geq\tfrac{1}{4}\delta N. \]
This shows that there are at least $\delta N/4$ ways to write $N$ as
the sum of an element in $D_{\gamma N_1}(A_1,A_2)$ and an element in
$A_3$. Each of these $\delta N/4$ representations gives rise to at
least $\gamma N_1$ ways to write $N$ as $a_1+a_2+a_3$ ($a_i\in
A_i$). This shows that
\[  \sum_{\substack{n_1,n_2,n_3\\ n_1+n_2+n_3=N}}a_1(n_1)a_2(n_2)a_3(n_3)\geq \xi^3\sum_{\substack{n_i\in A_i\\ n_1+n_2+n_3=N}}1\geq \tfrac{1}{4}\xi^3\delta\gamma N^2. \]
This completes the proof.

\end{proof}

\subsection{Decomposition of $a_i$ into uniform and anti-uniform parts}

For notational convenience, in this subsection we will fix some
$i\in\{1,2,3\}$ and simply write $a=a_i$, $\nu=\nu_i$, and $N=N_i$.
The main idea of reducing from general $\nu$ to the case $\nu=1$ is
to decompose the function $a$ into a structured part $a'$ and a
random part $a''$. The precise meanings of these properties are
summarized in Lemma \ref{lem:a'a''} below.

To construct this decomposition, let $0<\epsilon<1$ be a small
parameter to be chosen later (which depends only on $\delta$ and
$\kappa$). Let
\[ T=T_{\epsilon}=\{\theta\in\T:|\hat{a}(\theta)|\geq\epsilon N\}. \]
Since $a$ satisfies the $L^q$ extension estimate, the measure of
$T_{\epsilon}$ satisfies the bound
\begin{equation}\label{T}
\text{meas}(T_{\epsilon})\ll_{\epsilon}N^{-1}. \end{equation}
Define
\[ B=B_{\epsilon}=\{1\leq b\leq\epsilon N: \|b\theta\|<\epsilon\text{ for all }\theta\in T\}, \]
where $\|x\|$ denotes the distance from $x$ to its closest integer.
The definition of $B$ resembles the definition of Bohr sets in
finite abelian groups. In that setting, lower bounds for $|B|$ are
available in terms of its rank. The following lemma shows that a
similar lower bound holds in our situation as well.

\begin{lemma}\label{lem:large}
With the definitions of $T=T_{\epsilon}$ and $B=B_{\epsilon}$ as
above, we have $|B|\gg_{\epsilon}N$.
\end{lemma}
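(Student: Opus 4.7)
The plan is to mimic the classical lower bound for Bohr sets in finite abelian groups. The substantive new point is that $T$ is a continuous subset of the torus rather than a discrete set of frequencies, so the first task is to replace $T$ by a finite ``skeleton'' $T^{*}$ of bounded cardinality; once this is done, a standard pigeonhole argument produces $B$.

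To extract $T^{*}$, I first note that the majorization $0\le a\le\nu$ together with the pseudorandomness bound $\hat{\nu}(0)\le(1+\eta)N$ gives $\sum_n a(n)\le 2N$, and hence a crude estimate on the derivative of the trigonometric polynomial $\hat{a}$ yields the Lipschitz bound $|\hat{a}(\theta)-\hat{a}(\theta')|\le 4\pi N^{2}\|\theta-\theta'\|$ on $\T$. Let $T^{*}\subseteq T$ be a maximal $1/(CN)$-separated subset for a constant $C=C(\epsilon)$ to be chosen shortly. By maximality every $\theta\in T$ lies within $1/(CN)$ of some $\theta^{*}\in T^{*}$; and if $C\ge 4\pi/\epsilon$, the Lipschitz bound implies that each open arc of radius $1/(2CN)$ about a point of $T^{*}$ is contained in $T_{\epsilon/2}$. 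These arcs are pairwise disjoint, so
\[
\frac{|T^{*}|}{CN}\le \mathrm{meas}(T_{\epsilon/2})\ll_{\epsilon}\frac{1}{N},
\]
and therefore $d:=|T^{*}|\ll_{\epsilon}1$.

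Next I apply pigeonhole. Consider the map $b\mapsto(b\theta^{*})_{\theta^{*}\in T^{*}}$ from $[1,\epsilon N/2]\cap\Z$ into $\T^{d}$, and partition $\T^{d}$ into $\lceil 2/\epsilon\rceil^{d}$ boxes of side $\epsilon/2$. Since the number of boxes is $O_{\epsilon}(1)$ while the number of inputs is $\gg\epsilon N$, some box contains a set $S$ of at least $c_{\epsilon}N$ integers. Positive differences of pairs in $S$ produce at least $|S|-1\gg_{\epsilon}N$ integers $b\in[1,\epsilon N/2]$ with $\|b\theta^{*}\|<\epsilon/2$ for every $\theta^{*}\in T^{*}$. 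For an arbitrary $\theta\in T$, picking $\theta^{*}\in T^{*}$ within $1/(CN)$ of $\theta$ gives
\[
\|b\theta\|\le\|b\theta^{*}\|+b\,\|\theta-\theta^{*}\|<\tfrac{\epsilon}{2}+\epsilon N\cdot\tfrac{1}{CN}\le\epsilon
\]
as soon as $C\ge 2$. Thus every such $b$ lies in $B$, and $|B|\gg_{\epsilon}N$.

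The main obstacle is the first step: in the $\Z/N\Z$ setting the large-spectrum set is automatically finite and its cardinality is immediately bounded by Parseval, but here I have to manufacture a finite discretization of the continuous large-spectrum set $T$. The Lipschitz control on $\hat{a}$ furnished by the crude $\|a\|_{1}$ bound is precisely what makes this possible; once $T^{*}$ is in hand, the remainder is a direct port of the classical Bohr-set pigeonhole argument, with a little care to respect the integer cutoff $b\le\epsilon N$ built into the definition of $B$.
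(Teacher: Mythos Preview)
Your proof is correct and follows essentially the same strategy as the paper: discretize the continuous large spectrum $T$ to a finite skeleton of size $O_{\epsilon}(1)$ using the Lipschitz control on $\hat a$ together with the measure bound \eqref{T}, then run the standard pigeonhole argument for Bohr sets on this skeleton and lift back to $B$. The only cosmetic difference is that the paper extracts the skeleton via compactness plus the Vitali covering lemma, whereas you take a maximal $1/(CN)$-separated subset directly; both devices serve the identical purpose and the rest of the argument is the same.
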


\begin{proof}
For each $\theta\in T_{\epsilon}$ and $\ell>0$, let $I(\theta,\ell)=[\theta-\ell/2,\theta+\ell/2]$ be the interval of length $\ell$ centered at $\theta$. By compactness, there exists $\theta_1,\cdots,\theta_m\in T$ so that
\[ T_{\epsilon}\subset I(\theta_1,\epsilon/24N)\cup\cdots\cup I(\theta_m,\epsilon/24N). \]
By the Vitali covering lemma, there exists a subcollection
$\{I(\theta_{j},\epsilon/24N):j\in J\}$ consisting of disjoint
intervals and satisfying
\begin{equation}\label{cover}
T_{\epsilon}\subset \bigcup_{j\in J}I(\theta_{j},\epsilon/8N).
\end{equation}

We claim that $|J|=O_{\epsilon}(1)$. In fact, for any $\theta\in
I(\theta_{j},\epsilon/8N)$ ($j\in J$),
\[
|\hat{a}(\theta)-\hat{a}(\theta_j)|\leq\sum_{n=1}^Na(n)|1-e(n(\theta-\theta_j))|\leq\sum_{n=1}^Na(n)\cdot\frac{\epsilon
n}{2N}\leq\frac{\epsilon}{2}\sum_{n=1}^N\nu(n)=\frac{1}{2}\epsilon
N.
\]
Hence
\[ \hat{a}(\theta)\geq\hat{a}(\theta_{j})-\tfrac{1}{2}\epsilon N\geq\tfrac{1}{2}\epsilon N. \]
It follows that
\[ \bigcup_{j\in J}I(\theta_{j},\epsilon/8N)\subset T_{\epsilon /2}. \]
Using \eqref{T} we get
\[ \frac{\epsilon |J|}{24N}=\sum_{j\in J}\text{meas}(I(\theta_j,\epsilon/24N))\leq\text{meas}(T_{\epsilon/2})\ll_{\epsilon}\frac{1}{N}. \]
This proves that $|J|=O_{\epsilon}(1)$.

Now let
\[ B'=\{1\leq b\leq\epsilon N:\|b\theta_{j}\|<\epsilon/2\text{ for all }j\in J\}. \]
We claim that $B'\subset B$. To see this, take any $b\in B'$ and
$\theta\in T_{\epsilon}$. By \eqref{cover}, $\theta\in
I(\theta_{j},\epsilon/8N)$ for some $j\in J$. Hence
\[ \|b\theta\|\leq \|b\theta_{j}\|+b|\theta_{j_k}-\theta|<\epsilon. \]
This shows that $B'\subset B$. A lower bound for $|B'|$ can be
obtained by a simple pigeonhole argument. Divide the $|J|$
dimensional cube $[0,1]^{|J|}$ into small cubes of side length
$\epsilon/2$. For each $1\leq b\leq\epsilon N$, consider the small
cube the vector $v_b=(\|b\theta_j\|)_{j\in J}$ belongs to. By the
pigeonhole principle, there exists a small cube containing at least
$(2/\epsilon)^{|J|}\epsilon N$ vectors $v_b$. For $b_1,b_2$ with
$v_{b_1},v_{b_2}$ in the same small cube, the difference $|b_1-b_2|$
is an element of $B'$. Hence $|B|\geq |B'|\gg_{\epsilon}N$.
\end{proof}

The remaining arguments go along the same line as those of Green
\cite{green_roth,green_restriction}. Define
\[ a'(n)=\E_{b_1,b_2\in B}a(n+b_1-b_2)=\frac{1}{|B|^2}\sum_{b_1,b_2\in B}a(n+b_1-b_2),\quad a''(n)=a(n)-a'(n). \]

\begin{lemma}\label{lem:a'a''}
Suppose that $\eta$ is chosen small enough depending on $\epsilon$.
The functions $a'$ and $a''$ defined above have the following
properties:
\begin{enumerate}
\item ($a'$ is set-like) $0\leq a'(n)\leq 1+O_{\epsilon}(\eta)$ for any $n$. Moreover, $\E_{1\leq n\leq
N}a'(n)=\alpha+O(\epsilon)$.
\item ($a''$ is uniform)  $\hat{a}''(\theta)=O(\epsilon N)$ for all $\theta$.
\item ($a_1'$ is regular) $a_1'$ is
$(\delta/50,\kappa-O(\epsilon))$-regular.
\item $\|\hat{a}'\|_q\leq\|\hat{a}\|_q$ and
$\|\hat{a}''\|_q\leq\|\hat{a}\|_q$.
\end{enumerate}
\end{lemma}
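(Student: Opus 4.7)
The plan is to work directly from the formula $a'(n) = \frac{1}{|B|^2} \sum_{b_1, b_2 \in B} a(n + b_1 - b_2)$, extending $a$ by zero outside $[1, N]$, together with its Fourier-side counterpart. A short unwinding yields the clean identity
\[ \hat{a}'(\theta) = \hat{a}(\theta) \cdot \frac{|\hat{1_B}(\theta)|^2}{|B|^2}, \qquad \hat{a}''(\theta) = \hat{a}(\theta) \cdot \left(1 - \frac{|\hat{1_B}(\theta)|^2}{|B|^2}\right), \]
in which the Fourier multiplier takes values in $[0, 1]$ by the triangle inequality. Part (4) is then immediate from pointwise domination. For part (2) the estimate splits according to whether $\theta \in T_\epsilon$: if $\theta \in T_\epsilon$, the defining property $\|b\theta\| < \epsilon$ for all $b \in B$ gives $\hat{1_B}(\theta) = |B|(1 + O(\epsilon))$, so the multiplier in $\hat{a}''$ is $O(\epsilon)$, and combining with $|\hat{a}(\theta)| \leq \|\nu\|_1 = (1 + O(\eta))N$ (from pseudorandomness at $r = 0$) yields the bound; if $\theta \notin T_\epsilon$ then $|\hat{a}(\theta)| < \epsilon N$ by definition and the multiplier is bounded by $1$.

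For part (1), the lower bound $a'(n) \geq 0$ is immediate. For the upper bound I would majorize $a \leq \nu$, write
\[ \sum_{b_1, b_2 \in B} \nu(n + b_1 - b_2) = \sum_m \nu(m) h(m - n), \qquad h(k) := |B \cap (B + k)|, \]
and apply discrete Plancherel on $\Z/N\Z$ using $\hat{h}(r/N) = |\hat{1_B}(r/N)|^2$. The $r = 0$ frequency contributes $|B|^2 (1 + O(\eta))$; nonzero frequencies are controlled by $|\hat{\nu}(r/N)| \leq \eta N$ together with the Parseval identity $\sum_r |\hat{h}(r/N)| = N|B|$, yielding total error $O(\eta |B|)$. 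Dividing by $|B|^2$ and invoking Lemma \ref{lem:large} gives $a'(n) \leq 1 + O_\epsilon(\eta)$. The mean statement then follows from the identity $\sum_{n \in \Z} a'(n) = \alpha N$, combined with the fact that $a'$ is supported in an interval of length $N + O(\epsilon N)$ and pointwise bounded as above; the boundary contribution is $O(\epsilon N)$.

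Part (3) is the main obstacle. Writing $a_1 = a_1' + a_1''$ and expanding $a_1'(u) a_1'(v)$ reduces the claim to bounding the three cross-term sums $\sum_M a_1''(u) a_1(v)$, $\sum_M a_1(u) a_1''(v)$, and $\sum_M a_1''(u) a_1''(v)$ each by $O(\epsilon) N^2$, where $M$ is the regularity set with $\beta = \delta/50$. I would expand the coprimality condition $1_{(v-u, P(\beta^{-1})) = 1} = \sum_{d \mid P(\beta^{-1})} \mu(d) 1_{d \mid v - u}$ and further decompose into residue classes modulo $d$, which decouples the $u$ and $v$ variables into a product of one-dimensional sums on arithmetic progressions inside $[1, \beta N]$ and $[(1-\beta) N, N]$. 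Each such progression sum involving $a_1''$ is controlled via Parseval using the uniform Fourier bound $\hat{a}_1''(\theta) = O(\epsilon N)$ from part (2). Since the number of divisors $d \mid P(\beta^{-1})$ and residues $r \bmod d$ is bounded in terms of $\beta$ (hence of $\delta$) alone, the pieces combine to the desired error $O(\epsilon) N^2$, delivering the $(\delta/50, \kappa - O(\epsilon))$-regularity of $a_1'$.
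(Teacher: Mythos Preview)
Parts (1), (2), and (4) of your argument coincide with the paper's almost verbatim. For part (3) you take a different route: the paper exploits the convolution structure of $a_1'$ directly, enlarging $M$ to a set $M'$ with the $u$- and $v$-ranges widened by $\epsilon N$, writing $\sum_{M'} a_1'(u)a_1'(v)$ as an average over $b_1,\dots,b_4 \in B$ of shifted copies of sums $\sum a_1(u')a_1(v')$, each of which dominates $\sum_M a_1(u)a_1(v) \geq \kappa N^2$, and then controlling the overshoot by $2|M'\setminus M| = O(\epsilon N^2)$ via the pointwise bound $a_1' \leq 2$. You instead write $a_1' = a_1 - a_1''$ and attempt to bound the three cross terms by Fourier analysis.

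Your route has a genuine gap at the step ``each such progression sum involving $a_1''$ is controlled via Parseval using the uniform Fourier bound''. After the M\"obius and residue-class decomposition, the relevant sums are of the shape $\sum_{u \in I} a_1''(u)\, e(\xi u)$ with $I = [1,\beta N_1]$ a \emph{short} subinterval of the support of $a_1''$. This is not $\hat{a}_1''(\xi)$; inserting $1_I$ and writing the sum as $\int_0^1 \hat{a}_1''(\theta)\, \hat{1}_I(\xi-\theta)\, d\theta$, the bound $\|\hat{a}_1''\|_\infty \|\hat{1}_I\|_1$ costs a factor $\|\hat{1}_I\|_1 \asymp \log N$. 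The uniform Fourier bound alone therefore yields cross terms of size $O_\delta(\epsilon N^2 \log N)$, which is unbounded in $N$ and does not deliver $\kappa - O(\epsilon)$. The repair is to bring in the $L^q$ extension estimate (available by hypothesis and your part (4)): interpolating gives $\|\hat{a}_1''\|_p \ll \epsilon^{1-q/p} N^{1-1/p}$ for any $q < p < \infty$, and H\"older against $\|\hat{1}_I\|_{p'} \ll_\beta N^{1/p}$ bounds each short-interval sum by $O_{\delta}(\epsilon^{1-q/p} N)$. Combined with the trivial $O(N)$ bound on the companion $a_1$-sums (from $0 \leq a_1 \leq \nu_1$ and pseudorandomness at $r=0$), the cross terms are $O_\delta(\epsilon^c N^2)$ with $c = 1 - q/p \in (0,1)$, giving $(\delta/50,\kappa - O_\delta(\epsilon^c))$-regularity --- nominally weaker than the stated $\kappa - O(\epsilon)$, but fully adequate downstream. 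The paper's shift argument sidesteps this short-interval issue entirely, at the cost of being more delicate about how the coprimality condition in $M$ behaves under the shifts.
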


\begin{proof}
To prove (1), note that
\begin{align*}
a'(n) &\leq \E_{b_1,b_2\in B}\nu(n+b_1-b_2)\leq \E_{b_1,b_2\in B}\E_{0\leq r<N}\hat{\nu}(r/N)e_N(r(n+b_1-b_2)) \\
&=\E_{0\leq r<N}\hat{\nu}(r/N)e_N(rn)|\E_{b\in B}e_N(rb)|^2.
\end{align*}
The term $r=0$ gives $\hat{\nu}(0)=N(1+O(\eta))$. For $r\neq 0$, the
summand is bounded in absolute value by $\eta N|\E_{b\in
B}e_N(rb)|^2$. Hence
\[ a'(n)\leq 1+O(\eta)+\eta N\E_{0\leq r<N}|\E_{b\in B}e_N(rb)|^2=1+O(\eta)+\eta N|B|^{-1} \]
by Parseval's identity. By Lemma \ref{lem:large},
\[ a'(n)\leq 1+O_{\epsilon}(\eta). \]
If $\eta$ is chosen sufficiently small, $a'(n)\leq 2$ for all $n$.
The fact that $\E_{1\leq n\leq N}a'(n)=\alpha+O(\epsilon)$ follows
since $\E_{n\in\Z}a'(n)=\alpha$ and the support of $a'$ is contained
in $[-\epsilon N,(1+\epsilon)N]$.

To prove (2), note that the Fourier transform of $a''$ can be written as
\[ \hat{a}''(\theta)=\hat{a}(\theta)\left(1-|\E_{b\in B}e(b\theta)|^2\right). \]
For $\theta\notin T$, $|\hat{a}''(\theta)|\leq
|\hat{a}(\theta)|\leq\epsilon N$. For $\theta\in T$, we have
\[ 1-|\E_{b\in B}e(b\theta)|^2\leq 2(1-|\E_{b\in B}e(b\theta)|)\leq 2\E_{b\in B}|1-e(b\theta)|\ll\epsilon \]
by the definition of $B$. Hence $|\hat{a}''(\theta)|\ll\epsilon N$
as well.

To prove (3), write $\beta=\delta/50$. Define
\[ M=\{(u,v):1\leq u\leq \beta N,(1-\beta)N\leq v\leq
N,(v-u,P(\beta^{-1}))=1\}, \] and
\[ M'=\{(u,v):-\epsilon N\leq u\leq (\beta+\epsilon) N,(1-\beta-\epsilon)N\leq v\leq (1+\epsilon)N,(v-u,P(\beta^{-1}))=1\}. \]
Note that
\begin{align*}
\sum_{(u,v)\in M'}a'(u)a'(v) &=\E_{b_1,b_2,b_3,b_4\in B}\sum_{(u,v)\in M'}a(u+b_1-b_2)a(v+b_3-b_4) \\
&\geq\E_{b_1,b_2,b_3,b_4\in B}\sum_{(u,v)\in M}a(u)a(v)\geq\kappa
N^2.
\end{align*}
Hence,
\[ \sum_{(u,v)\in M}a'(u)a'(v)\geq\sum_{(u,v)\in M'}a'(u)a'(v)-2|M'\setminus M|\geq (\kappa-O(\epsilon))N^2. \]

To prove (4), note that for any $\theta$,
\[ \hat{a}'(\theta)=\hat{a}(\theta)|\E_{b\in B}e(b\theta)|^2,\quad \hat{a}''(\theta)=\hat{a}(\theta)(1-|\E_{b\in B}e(b\theta)|^2), \]
and thus $|\hat{a}'(\theta)|\leq |\hat{a}(\theta)|$ and
$|\hat{a}''(\theta)|\leq |\hat{a}(\theta)|$.

\end{proof}

\subsection{Reduction to the case $\nu_i=1$}\label{sec:reduce}

For each $i\in\{1,2,3\}$, we obtained a decomposition
$a_i=a_i'+a_i''$ satisfying the conditions summarized in Lemma
\ref{lem:a'a''}. In this section, we will show that the
contributions from $a_i''$ are negligible, and thus we may
essentially replace $a_i$ by $a_i'$. Now that the functions $a_i'$
are essentially bounded above by $1$, we are back in the case
$\nu_i=1$ treated in Lemma \ref{lem:trans}.

\begin{lemma}\label{lem:diff}
With the functions $a_i,a_i'$ defined as above, we have
\[ \left|\sum_{n,m}a_1(n)a_2(m)a_3(N-n-m)-\sum_{n,m}a_1'(n)a_2'(m)a_3'(N-n-m)\right|\ll \epsilon^{3-q} N^2. \]
\end{lemma}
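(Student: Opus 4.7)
The plan is to write $a_i = a_i' + a_i''$ for each $i$ and expand the product as a telescoping sum of three terms, each containing exactly one $a_j''$ factor:
\[ a_1 a_2 a_3 - a_1' a_2' a_3' = a_1'' a_2 a_3 + a_1' a_2'' a_3 + a_1' a_2' a_3''. \]
Each of the resulting triple sums over $n_1 + n_2 + n_3 = N$ is a convolution evaluated at $N$, which I would rewrite via Fourier inversion as
\[ \sum_{n_1+n_2+n_3=N} f_1(n_1) f_2(n_2) f_3(n_3) = \int_0^1 \hat{f}_1(\theta)\hat{f}_2(\theta)\hat{f}_3(\theta)\, e(-N\theta)\, d\theta. \]
The problem thus reduces to bounding three integrals of the form $\int_0^1 \hat{g}_1\,\hat{g}_2\,\hat{g}_3\, e(-N\theta)\, d\theta$, where exactly one factor is $\hat{a}_i''$ and the other two are Fourier transforms of $a_j$ or $a_j'$.

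Next, I would apply H\"older's inequality to each such integral with exponents $(r, q, q)$ satisfying $1/r + 2/q = 1$, placing the $q$-norms on the two non-$a''$ factors and the $r$-norm on the $a_i''$ factor. Since $2 < q < 3$, we have $r = q/(q-2) > q$, and interpolating between $L^q$ and $L^\infty$ gives
\[ \|\hat{a}_i''\|_r \leq \|\hat{a}_i''\|_\infty^{1-q/r}\, \|\hat{a}_i''\|_q^{q/r} = \|\hat{a}_i''\|_\infty^{3-q}\, \|\hat{a}_i''\|_q^{q-2}. \]
At this point I invoke Lemma \ref{lem:a'a''}: part (2) supplies $\|\hat{a}_i''\|_\infty \ll \epsilon N$, while part (4) combined with the $L^q$ extension hypothesis yields $\|\hat{a}_i\|_q, \|\hat{a}_i'\|_q, \|\hat{a}_i''\|_q \ll N^{1-1/q}$ uniformly in $i$.

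Plugging in, each telescoping term is bounded by a constant multiple of
\[ (\epsilon N)^{3-q} \cdot (N^{1-1/q})^{q-2} \cdot (N^{1-1/q})^2 = \epsilon^{3-q}\, N^{3-q}\, N^{q-1} = \epsilon^{3-q}\, N^2, \]
and summing the three telescoping terms yields the claimed bound. There is no substantial obstacle here; the argument is a standard Fourier--H\"older computation. The only point requiring genuine care is the exponent bookkeeping: one needs $q > 2$ to keep $r$ finite, and $q < 3$ so that $\|\hat{a}_i''\|_\infty^{3-q}$ is a genuine positive power of $\epsilon N$ — both of which are precisely guaranteed by the $L^q$ extension hypothesis with $2 < q < 3$.
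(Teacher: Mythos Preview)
Your proof is correct and follows essentially the same approach as the paper: Fourier inversion followed by H\"older's inequality with the interpolation $\|\hat{a}_i''\|_r \leq \|\hat{a}_i''\|_\infty^{3-q}\|\hat{a}_i''\|_q^{q-2}$, then invoking Lemma~\ref{lem:a'a''}. The only cosmetic difference is that you use a telescoping decomposition into three terms, whereas the paper expands the product into the (equivalent) collection of terms in which at least one factor is an $a_i''$.
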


\begin{proof}
The difference on the left can be expressed as a sum of several terms, each of the form
\[ \sum_{\substack{n_1,n_2,n_3\\ n_1+n_2+n_3=N}}f_1(n_1)f_2(n_2)f_3(n_3)=\int_0^1\hat{f_1}(\theta)\hat{f_2}(\theta)\hat{f_3}(\theta)e(-N\theta)d\theta, \]
where $f_i\in\{a_i,a_i',a_i''\}$, and $f_i=a_i''$ for at least one $i$. Without loss of generality, assume that $f_3=a_3''$. By H\"{o}lder's inequality, this is bounded above by
\[ \|\hat{f}_3\|_{\infty}^{3-q}\|\hat{f}_3\|_q^{q-2}\|\hat{f}_1\|_q\|\hat{f}_2\|_q. \]
By Lemma \ref{lem:a'a''}, $\|\hat{f}_3\|_{\infty}\ll\epsilon N$. By
the $L^q$ extension estimate together with Lemma \ref{lem:a'a''},
all of $\|\hat{f}_3\|_q$, $\|\hat{f}_1\|_q$, and $\|\hat{f}_2\|_q$
are bounded above by $O_q(N^{1-1/q})$. Combining these we get the
desired bound.
\end{proof}

We now finish the proof of Theorem \ref{thm:2/5}. By Lemma
\ref{lem:a'a''}, the functions $a_i'$ are all bounded above
uniformly by $1+O_{\epsilon}(\eta)$ with averages
$\alpha+O(\epsilon)$, and $a_1'$ is $(\delta/50,\kappa/2)$-regular.
If $\epsilon$ and $\eta$ are chosen small enough, Lemma
\ref{lem:trans} then implies that
\[ \sum_{n,m}a_1'(n)a_2'(m)a_3'(N-n-m)\geq cN^2 \]
for some $c=c(\delta,\kappa)>0$. Combining this with Lemma \ref{lem:diff}, we deduce by choosing $\epsilon$ small enough that
\[ \sum_{n,m}a_1(n)a_2(m)a_3(N-n-m)\geq \tfrac{1}{2}cN^2. \]
This completes the proof of Theorem \ref{thm:2/5}.


\section{Construction of Selberg's majorant}\label{sec:selberg}

In this section and the next, we begin the task of constructing
Selberg's majorant $\nu$ and showing that it is pseudorandom. The
construction of $\nu$ can be found in any book on sieve theory. Our
notations will follow those in \cite{opera}. After recalling some
classical properties of Selberg's weights, we prove the main result
in this section, Lemma \ref{lem:Tq}, which will be used to show that
$\nu$ is pseudorandom.

Let $W$ be a squarefree positive integer and let $b\pmod W$ be a
reduced residue modulo $W$. Apply the arguments in Chapter 7 of
\cite{opera} to construct an upper bound sieve for the primes $Wn+b$
($1\leq n\leq N$). See, in particular, Theorem 7.1 in \cite{opera}.

Let $z\geq 2$ be a parameter and let $D=z^2$ be the sieving level.
Let $P$ be the product of all primes $p<z$ with $(p,W)=1$. The
weights $\rho_d$ defined for $d\mid P$ satisfy the following
properties. They are supported on integers smaller than $z$:
\begin{equation}\label{support}
\rho_d=0\ \ \text{if }d\geq z.
\end{equation}
Their absolute values are bounded:
\begin{equation}\label{rhod}
|\rho_d|\leq 1.
\end{equation}
Moreover,
\begin{equation}\label{rho1}
\rho_1=1. \end{equation}
The linear change of variables
\begin{equation}\label{y} y_d=\mu(d)\phi(d)\sum_{m\equiv 0\pmod
d}\frac{\rho_m}{m}
\end{equation}
satisfy $y_d=J^{-1}$ for $d<z$ and $y_d=0$ for $d\geq
z$, where
\[ J=\sum_{\substack{d\mid P\\ d<z}}\frac{1}{\phi(d)}=\sum_{\substack{d<z\\ (d,W)=1}}\frac{1}{\phi(d)}. \]

Using these weights we define a function
$\nu=\nu(N,z,W,b):[1,N]\rightarrow\R$ by
\[ \nu(n)=\frac{\phi(W)}{W}\log z\left(\sum_{d\mid
(Wn+b,P)}\rho_d\right)^2. \] Clearly, \eqref{support} and
\eqref{rho1} imply that
\begin{equation}\label{major}
\nu(n)\geq\frac{\phi(W)}{W}\log z\ \ \text{if }Wn+b\text{ is prime
and }Wn+b\geq z.
\end{equation}
Hence $\nu$ serves as a majorant for the primes of the form $Wn+b$.

The following estimate will be used multiple times:

\begin{lemma}\label{lem:phi}
For any $z\geq 2$ and positive integer $m$ dividing $P(z)$,
\[ \sum_{\substack{d<z\\
(d,m)=1}}\frac{1}{\phi(d)}\ll\frac{\phi(m)}{m}\log z \] and
\[ \sum_{\substack{d<z\\
(d,m)=1}}\frac{1}{\phi(d)}=\frac{\phi(m)}{m}(\log z+O_m(1)).
\]
\end{lemma}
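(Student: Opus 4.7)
The plan is to reduce both claims to a Mertens-type evaluation via a convolution identity. Following the convention established earlier in the paper---in particular, the identification $J=\sum_{d\mid P, d<z}1/\phi(d)=\sum_{d<z, (d,W)=1}1/\phi(d)$, which implicitly restricts $d$ to be squarefree---I will interpret the sums as running over squarefree $d<z$ coprime to $m$. The first bound is an immediate consequence of the asymptotic, so I would focus on proving the latter.

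The key tool is the identity $\mu^2(n)/\phi(n) = (1/n)\sum_{d\mid n} h(d)$, where $h$ is the multiplicative function with $h(1)=1$, $h(p)=1/(p-1)$, $h(p^2)=-p/(p-1)$, and $h(p^k)=0$ for $k\geq 3$. This follows from M\"{o}bius inversion applied to $g(n)=n\mu^2(n)/\phi(n)$, which satisfies $g(p)=p/(p-1)$ and $g(p^k)=0$ for $k\geq 2$. Substituting and swapping the order of summation yields
\[
\sum_{\substack{d<z\\(d,m)=1}}\frac{\mu^2(d)}{\phi(d)} = \sum_{\substack{e\geq 1\\(e,m)=1}}\frac{h(e)}{e}\sum_{\substack{k<z/e\\(k,m)=1}}\frac{1}{k}.
\]
The inner sum is handled by inclusion-exclusion over divisors of $m$ together with the harmonic sum estimate $\sum_{j\leq x}1/j=\log x+\gamma+O(1/x)$, giving $\sum_{k<y,(k,m)=1}1/k = (\phi(m)/m)\log y + C_m + O_m(1/y)$ for an explicit constant $C_m$.

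To extract the leading coefficient, I would then evaluate the outer Euler product:
\[
\sum_{(e,m)=1}\frac{h(e)}{e} = \prod_{p\nmid m}\left(1 + \frac{h(p)}{p} + \frac{h(p^2)}{p^2}\right) = \prod_{p\nmid m}\left(1 + \frac{1}{p(p-1)} - \frac{1}{p(p-1)}\right) = 1.
\]
The cancellation between the $h(p)$ and $h(p^2)$ terms is precisely what gives leading coefficient exactly $1$, rather than the Euler constant $\zeta(2)\zeta(3)/\zeta(6)$ that would arise if the sum ran over non-squarefree $d$ as well.

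The main obstacle will be the error-term bookkeeping. The remaining contributions come from $(\phi(m)/m)\sum h(e)(\log e)/e$, from $C_m\sum h(e)/e$, and from the tails $\sum_{e\geq z}h(e)/e$ and the $O_m(e/z)$ pieces of the inner sum. Absolute convergence of $\sum|h(e)|/e$ (since each Euler factor equals $1+O(1/p^2)$), together with a Rankin-type tail bound $\sum_{e>z}|h(e)|/e\ll_{\epsilon}z^{-1+\epsilon}$, ensures that all these contributions are $O_m(1)$, yielding the desired asymptotic.
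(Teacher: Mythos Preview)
Your proposal is correct and takes a genuinely different route from the paper. The paper dispatches the upper bound in one line via an Euler product: since every squarefree $d<z$ has all prime factors below $z$, the sum is at most $\prod_{p<z,\,p\nmid m}\bigl(1+1/(p-1)\bigr)=(\phi(m)/m)\prod_{p<z}(1-1/p)^{-1}\ll(\phi(m)/m)\log z$ by Mertens; for the asymptotic it simply cites a standard reference (Theorem~A.8 of \cite{opera}). Your argument instead proves the asymptotic directly via the Dirichlet convolution $\mu^2(n)/\phi(n)=\sum_{ek=n}(h(e)/e)(1/k)$ and the hyperbola-type swap, and then reads off the upper bound as a corollary. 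The trade-off is clear: the paper's treatment of the first inequality is shorter and more transparent, but it defers the real content of the second statement to the literature; your approach is longer but fully self-contained and delivers both claims at once. One small correction: the Dirichlet series $\sum_e|h(e)|e^{-s}$ has abscissa of absolute convergence $1/2$ (the local factor contains a term $|h(p^2)|p^{-2s}\sim p^{-2s}$), so the Rankin tail is only $\ll_\epsilon z^{-1/2+\epsilon}$, not $z^{-1+\epsilon}$ as you wrote; this is harmless for the $O_m(1)$ conclusion.
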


\begin{proof}
The upper bound is clear:
\[ \sum_{\substack{d<z\\
(d,m)=1}}\frac{1}{\phi(d)}\leq\prod_{\substack{p<z\\ p\nmid
m}}\left(1+\frac{1}{\phi(p)}\right)=\frac{\phi(m)}{m}\prod_{p<z}\left(1+\frac{1}{p-1}\right)\ll\frac{\phi(m)}{m}\log
z.
\]
The asymptotic can be obtained by standard methods in analytic
number theory. See, for example, Theorem A.8 in \cite{opera}.
\end{proof}

In particular, Lemma \ref{lem:phi} implies
\begin{equation}\label{J}
J=\frac{\phi(W)}{W}(\log z+O_W(1)).
\end{equation}

\begin{lemma}\label{lem:Jqr}
For any positive integers $q$ and $r$ dividing $P$, the sum
\[ J(q,r)=\sum_{\substack{d\mid P\\ (d,q)=1}}\frac{\rho_{rd}}{d} \]
satisfies \[ |J(q,r)|\leq J^{-1}\frac{[q,r]}{\phi([q,r])}. \]
Moreover, $J(q,q)=y_qq\mu(q)/\phi(q)$.
\end{lemma}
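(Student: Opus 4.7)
The plan is to invert the linear change of variables \eqref{y} by Möbius inversion to express $\rho_m/m$ in terms of the $y_d$'s, substitute the result into the definition of $J(q,r)$, and collapse the resulting double sum. Since $P$ is squarefree, every divisor $m \mid P$ with $d \mid m$ factors uniquely as $m = dk$ with $(d,k) = 1$, so $\mu(m) = \mu(d)\mu(k)$ and $\phi(m) = \phi(d)\phi(k)$. Möbius inversion over the squarefree divisors of $P$, applied to the relation \eqref{y}, then yields
\[ \frac{\rho_d}{d} = \frac{\mu(d)}{\phi(d)} \sum_{\substack{k \mid P \\ (k,d) = 1}} \frac{y_{dk}}{\phi(k)}. \]

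Next I would substitute this formula with $d$ replaced by $rd$ into the definition of $J(q,r)$, noting that $\rho_{rd}$ vanishes unless $(r,d) = 1$ and that $\rho_{rd}/d = r\rho_{rd}/(rd)$. Regrouping the resulting double sum by setting $D = dk$ (so that $D$ ranges over squarefree divisors of $P$ coprime to $r$, and $d$ over divisors of $D$ coprime to $q$) produces
\[ J(q,r) = \frac{r\mu(r)}{\phi(r)} \sum_{\substack{D \mid P \\ (D,r) = 1}} \frac{y_{rD}}{\phi(D)} \sum_{\substack{d \mid D \\ (d,q) = 1}} \mu(d). \]
The inner Möbius sum is standard: writing $[(d,q)=1] = \sum_{e \mid (d,q)} \mu(e)$ and swapping the order of summation, one finds $\sum_{d \mid D,\, (d,q)=1} \mu(d) = [D \mid q]$ (using that $D$ is squarefree). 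The joint conditions $(D,r) = 1$ and $D \mid q$ then force $D \mid q/(q,r)$, yielding the clean identity
\[ J(q,r) = \frac{r\mu(r)}{\phi(r)} \sum_{D \mid q/(q,r)} \frac{y_{rD}}{\phi(D)}. \]

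Both conclusions of the lemma now follow readily. For $r = q$, only $D = 1$ contributes, giving $J(q,q) = q\mu(q)y_q/\phi(q)$. For the upper bound, I would use $|y_{rD}| \leq J^{-1}$ (from $y_d = J^{-1}$ for $d < z$ and $y_d = 0$ for $d \geq z$) together with the elementary identity $\sum_{D \mid n} 1/\phi(D) = n/\phi(n)$ for squarefree $n$, applied with $n = q/(q,r)$. Since $\bigl(q/(q,r),\, r\bigr) = 1$, multiplicativity of $\phi$ converts the product $\tfrac{r}{\phi(r)} \cdot \tfrac{q/(q,r)}{\phi(q/(q,r))}$ into $[q,r]/\phi([q,r])$, giving the claimed bound. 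The only real bookkeeping challenge is tracking the squarefreeness constraint $d \mid P$ correctly throughout the Möbius inversion; no analytic input beyond the defining properties \eqref{support}--\eqref{y} of the Selberg weights is required.
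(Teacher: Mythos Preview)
Your proof is correct and reaches exactly the same key identity as the paper,
\[
J(q,r)=\frac{r\mu(r)}{\phi(r)}\sum_{D\mid q/(q,r)}\frac{y_{rD}}{\phi(D)},
\]
after which the two conclusions follow identically. The only difference is the order of operations: the paper expands the condition $(d,q)=1$ by M\"obius, swaps the sums, and then recognizes the inner sum as $y_{re}\mu(re)/\phi(re)$ directly from the definition \eqref{y}; you instead first invert \eqref{y} to express $\rho_m/m$ in terms of the $y$'s, substitute, and collapse via $D=dk$. These are dual ways of doing the same M\"obius manipulation, so the approaches are essentially the same.
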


\begin{proof}
We write
\[ J(q,r)=\sum_{d\mid
P}\frac{\rho_{rd}}{d}\sum_{e\mid (d,q)}\mu(e)=\sum_{e\mid
q}\mu(e)\sum_{\substack{d\mid P\\ e\mid d}}\frac{\rho_{rd}}{d}. \]
Note that $\rho_{rd}=0$ if $rd$ is not squarefree. Hence we can
restrict the sum to those $e$ with $(e,r)=1$:
\[ J(q,r)=\sum_{e\mid q/(q,r)}\mu(e)\sum_{\substack{d\mid P\\ e\mid
d}}\frac{\rho_{rd}}{d}=r\sum_{e\mid
q/(q,r)}\mu(e)y_{re}\mu(re)\phi(re)^{-1}=\frac{r\mu(r)}{\phi(r)}\sum_{e\mid
q/(q,r)}\frac{y_{re}}{\phi(e)}.
\]
If $q=r$, then $q/(q,r)=1$, and thus
\[ J(q,q)=\frac{q\mu(q)}{\phi(q)}y_q. \]
In general, since $y_{re}$ is bounded by $J^{-1}$, it follows that
\[ |J(q,r)|\leq J^{-1}\frac{r}{\phi(r)}\sum_{e\mid
q/(q,r)}\frac{1}{\phi(e)}=J^{-1}\frac{r}{\phi(r)}\frac{q/(q,r)}{\phi(q/(q,r))}=J^{-1}\frac{[q,r]}{\phi([q,r])}.
\]
\end{proof}

\begin{lemma}\label{lem:Tq}
For any positive integer $q$ dividing $P$, the sum
\[ T(q)=\sum_{\substack{d_1,d_2\mid P\\ q\mid
[d_1,d_2]}}\frac{\rho_{d_1}\rho_{d_2}}{[d_1,d_2]} \] satisfies
\[ |T(q)|\ll_{\epsilon} J^{-1}q^{-1+\epsilon}. \]
Moreover, $T(1)=J^{-1}$.
\end{lemma}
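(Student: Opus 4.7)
The strategy is to combine inclusion-exclusion on the condition $q \mid [d_1,d_2]$ with the standard Selberg identity, and then to extract cancellation from a Möbius sum to obtain the $q^{-1+\epsilon}$ saving. The case $T(1)=J^{-1}$ is the classical Selberg identity: expanding $(d_1,d_2) = \sum_{e \mid (d_1,d_2)}\phi(e)$ separates variables and yields $T(1) = \sum_{e \mid P} y_e^2/\phi(e)$, which by $y_e = J^{-1}\mathbbm{1}[e<z]$ and the definition of $J$ equals $J^{-1}$.

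For general squarefree $q \mid P$, the plan is as follows. Since $q \mid [d_1,d_2]$ iff every prime of $q$ divides $d_1$ or $d_2$, inclusion-exclusion gives
\[ \mathbbm{1}[q \mid [d_1,d_2]] = \sum_{r \mid q}\mu(r)\,\mathbbm{1}[(d_1,r) = 1]\,\mathbbm{1}[(d_2,r)=1], \]
so $T(q) = \sum_{r \mid q}\mu(r)S(r)$ with $S(r) := \sum_{d_i \mid P,\,(d_i,r)=1}\rho_{d_1}\rho_{d_2}/[d_1,d_2]$. Running the Selberg-identity derivation with these extra coprimality conditions (expanding the g.c.d., separating variables, and applying Möbius inversion exactly as in the proof of Lemma \ref{lem:Jqr}) produces
\[ S(r) = \frac{1}{J^2}\sum_{\substack{e \mid P\\(e,r)=1}}\frac{L_e(r)^2}{\phi(e)},\qquad L_e(r) := \sum_{\substack{k \mid r\\ek<z}}\frac{1}{\phi(k)}. \]

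Swapping the orders of summation gives $J^2 T(q) = \sum_{e \mid P,\,e<z}\phi(e)^{-1}U_e$, where $U_e := \sum_{r \mid q_e}\mu(r)L_e(r)^2$ and $q_e := q/(q,e)$. Expanding the square and using the identity $\sum_{r \mid q_e,\,m \mid r}\mu(r) = \mu(q_e)\mathbbm{1}[m = q_e]$, only pairs with $[k_1,k_2]=q_e$ survive, giving
\[ U_e = \mu(q_e)\sum_{\substack{[k_1,k_2]=q_e\\k_1,k_2<z/e}}\frac{1}{\phi(k_1)\phi(k_2)}. \]
Parametrizing such pairs by $q_e = abc$ (pairwise coprime, with $k_1 = ac$, $k_2 = bc$), the inner sum is bounded multiplicatively by $h(q_e) := \prod_{p \mid q_e}(2p-1)/(p-1)^2$. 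A short computation then splits $e = e_1e_2$ with $e_1 \mid q$ and $(e_2,q)=1$, applies Lemma \ref{lem:phi} to bound $\sum_{e_2<z/e_1}1/\phi(e_2)$, and sums the multiplicative function $h(q/e_1)/\phi(e_1)$ over $e_1 \mid q$, to yield $J^2|T(q)| \ll_\epsilon q^{-1+\epsilon}\phi(W)\log z/W \ll_\epsilon q^{-1+\epsilon}J$ by \eqref{J}.

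The main subtlety is the Möbius collapse in the computation of $U_e$: a naive absolute-value bound $|U_e| \leq \sum_{r \mid q_e}L_e(r)^2$ yields only $|U_e| \ll_\epsilon q^\epsilon$, which would give the much weaker $|T(q)| \ll q^\epsilon/J$. Imposing $[k_1,k_2] = q_e$ on the surviving pairs is what forces the saving $\phi(q_e)^{-1}$, and this, combined with the Möbius summation over divisors of $q$, is what produces the desired $q^{-1+\epsilon}$ factor; everything else is routine bookkeeping against Lemma \ref{lem:phi}.
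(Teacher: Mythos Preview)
Your argument is correct, but it takes a different route from the paper's proof. The paper decomposes each $d_i$ directly as $d_i=e_if_i$ with $e_i=(d_i,q)$ and $(f_i,q)=1$, so that the constraint $q\mid[d_1,d_2]$ becomes simply $[e_1,e_2]=q$; it then separates the $f$-variables via the identity $(f_1,f_2)=\sum_{g\mid(f_1,f_2)}\phi(g)$ and bounds the resulting sums $g^{-1}J(qg,e_ig)$ using Lemma~\ref{lem:Jqr}, arriving quickly at $|T(q)|\ll J^{-1}\,3^{\omega(q)}/\phi(q)$. Your approach instead removes the lcm condition by M\"obius inclusion--exclusion over $r\mid q$, computes each $S(r)$ in closed form, and then recovers the saving through the M\"obius collapse forcing $[k_1,k_2]=q_e$. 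Both arguments ultimately exploit the same phenomenon---that only $3^{\omega(q)}$ pairs with a fixed lcm contribute, each weighted by roughly $\phi(q)^{-2}$---and your final bound $\prod_{p\mid q}(3p-2)/(p-1)^2$ differs from the paper's $\prod_{p\mid q}3/(p-1)$ only by a factor $\ll_\epsilon q^\epsilon$. The paper's route is shorter because Lemma~\ref{lem:Jqr} has already packaged the relevant M\"obius inversion; your route is more self-contained and makes the cancellation mechanism (the M\"obius collapse you highlight) very explicit.
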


\begin{proof}
Write $e_1=(d_1,q)$, $d_1=e_1f_1$, $e_2=(d_2,q)$, and $d_2=e_2f_2$.
Then
\[ T(q)=\sum_{\substack{e_1,e_2\mid q\\
[e_1,e_2]=q}}\sum_{\substack{f_1,f_2\mid P\\
(f_1,q)=(f_2,q)=1}}\frac{\rho_{e_1f_1}\rho_{e_2f_2}}{q[f_1,f_2]} \]
For fixed $e_1,e_2$, use the identities $(f_1,f_2)[f_1,f_2]=f_1f_2$
and $(f_1,f_2)=\sum_{g\mid (f_1,f_2)}\phi(g)$ to rewrite the inner
sum as
\[ \frac{1}{q}\sum_{\substack{f_1,f_2\mid P\\
(f_1,q)=(f_2,q)=1}}\frac{\rho_{e_1f_1}\rho_{e_2f_2}}{f_1f_2}\sum_{g\mid
(f_1,f_2)}\phi(g)=\frac{1}{q}\sum_{\substack{g\mid P\\
(g,q)=1}}\phi(g)\left(\sum_{\substack{f_1\mid P\\ (f_1,q)=1\\ g\mid
f_1}}\frac{\rho_{e_1f_1}}{f_1}\right)\left(\sum_{\substack{f_2\mid
P\\ (f_2,q)=1\\ g\mid f_2}}\frac{\rho_{e_2f_2}}{f_2}\right)
\]
The two sums in the parentheses above are $g^{-1}J(qg,e_1g)$ and
$g^{-1}J(qg,e_2g)$. When $q=1$, apply Lemma \ref{lem:Jqr} to get
\[ T(1)=\sum_{g\mid P}\phi(g)(g^{-1}J(g,g))^2=\sum_{g\mid
P}\frac{y_g^2}{\phi(g)}=J^{-2}\sum_{\substack{g\mid P\\
g<z}}\frac{1}{\phi(g)}=J^{-1}. \]

In general, Lemma \ref{lem:Jqr} gives the bounds
\[ |g^{-1}J(qg,e_1g)|\leq J^{-1}\frac{q}{\phi(qg)},\ \
|g^{-1}J(qg,e_2g)|\leq J^{-1}\frac{q}{\phi(qg)}. \] Observe that
there are $3^{\omega(q)}$ pairs $(e_1,e_2)$ with $[e_1,e_2]=q$. Note
also that we can clearly restrict the sum to $g<z$. Hence by Lemma
\ref{lem:phi},
\[ |T(q)|\leq 3^{\omega(q)}J^{-2}\frac{q}{\phi(q)^2}\sum_{\substack{g<z\\ (g,qW)=1}}\frac{1}{\phi(g)}\ll 3^{\omega(q)}J^{-2}\frac{q}{\phi(q)^2}\frac{\phi(qW)}{qW}\log z\ll J^{-1}\frac{3^{\omega(q)}}{\phi(q)}. \]
The desired bound for $|T(q)|$ follows because
$3^{\omega(q)}\ll_{\epsilon}q^{\epsilon}$ and $\phi(q)\gg_{\epsilon}
q^{1-\epsilon}$.
\end{proof}


\section{Pseudorandomness of Selberg's majorant}\label{sec:random}

Let $W=\prod_{p\leq w}p$ be the product of primes up to some large
constant $w$. Let $b\pmod W$ be a reduced residue class. Fix a small
positive constant $\delta>0$. Let $N$ be a positive integer
sufficiently large (depending on $w$ and $\delta$) and take
$z=N^{1/2-\delta}$. In the previous section, we constructed a
majorant $\nu=\nu(N,z,W,b):[1,N]\rightarrow\R$ by
\[ \nu(n)=\frac{\phi(W)}{W}\log z\left(\sum_{d\mid
(Wn+b,P)}\rho_d\right)^2 \] with some weights $\rho_d$. In this
section, we show that $\nu$ is pseudorandom.

\begin{theorem}\label{thm:random}
With the notations as above, for any $r\in\Z/N\Z$,
\[ \hat{\nu}(r)=(\delta_{r,0}+O_{\epsilon}(w^{-1+\epsilon}))N, \]
where $\delta_{r,0}$ is the Kronecker delta. In other words, $\nu$
is $O_{\epsilon}(w^{-1+\epsilon})$-pseudorandom.
\end{theorem}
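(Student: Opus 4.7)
The plan is to open the square in the definition of $\nu$ and interchange summations, so that
\[
\hat{\nu}(r) \;=\; \frac{\phi(W)\log z}{W} \sum_{d_1,d_2\mid P} \rho_{d_1}\rho_{d_2} \sum_{\substack{1 \le n \le N \\ [d_1,d_2]\mid Wn+b}} e(rn/N).
\]
For each pair set $q=[d_1,d_2]$; since $(q,W)=(b,W)=1$, the congruence $q\mid Wn+b$ pins $n$ to a single residue class $n_0(q)\pmod{q}$, so the inner sum is an exponential sum over an arithmetic progression of common difference $q$ in $[1,N]$. Expanding the AP indicator via characters on $\Z/q\Z$, $\mathbf{1}_{n\equiv n_0(q)\pmod{q}} = q^{-1}\sum_{a\,(\mathrm{mod}\,q)} e(a(n-n_0(q))/q)$, converts the inner sum into $q^{-1}\sum_{a}e(-an_0(q)/q)\,S(r/N+a/q)$ with $S(\beta)=\sum_{n=1}^N e(n\beta)$.

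Regrouping the Fourier variable by the primitive denominator $q'$ of $a/q$ (so $a/q=a'/q'$ with $(a',q')=1$ and $q'\mid q$), the weighted sum over $(d_1,d_2)$ of $\rho_{d_1}\rho_{d_2}/[d_1,d_2]$ restricted to $q'\mid[d_1,d_2]$ is exactly $T(q')$ from Lemma~\ref{lem:Tq}. Thus
\[
\hat{\nu}(r) \;=\; \frac{\phi(W)\log z}{W}\sum_{q'\mid P} T(q')\sum_{\substack{1\le a'\le q' \\ (a',q')=1}} e(-a'n_0(q')/q')\,S(r/N+a'/q'),
\]
with the $q'=1$ term interpreted as $T(1)S(r/N)$. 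This $q'=1$ contribution equals $\frac{\phi(W)\log z}{WJ}S(r/N)$, which by \eqref{J} is $(1+O_W(1/\log z))S(r/N)$; since $S(0)=N$ and $S(r/N)=0$ for integers $r\in[1,N-1]$, it produces exactly the claimed main term $\delta_{r,0}N$ up to an error that, for $N$ large compared to $w$, is comfortably $\ll w^{-1+\epsilon}N$.

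For the tail $q'>1$, Lemma~\ref{lem:Tq} yields $|T(q')|\ll_\epsilon J^{-1}(q')^{-1+\epsilon}$, and the constraint $q'\mid P$ forces every prime factor of $q'$ to exceed $w$, so the smallest nontrivial $q'$ that appears is already $>w$. The resulting $w^{-1+\epsilon}$ savings from $T(q')$, combined with bounds on the exponential sum $\sum_{(a',q')=1}|S(r/N+a'/q')|$ via $|S(\beta)|\le\min(N,\|\beta\|^{-1})$ (treating separately the at most one $a'$ for which $\|r/N+a'/q'\|$ is exceptionally small), should give the desired $O_\epsilon(w^{-1+\epsilon}N)$ bound. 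The main obstacle is precisely this exponential-sum bookkeeping: the trivial $|S|\le N$ is far too lossy, while a pure Plancherel/Cauchy--Schwarz estimate on the $a'$-sum leaves $\sum_{q'>w,\,q'\mid P}(q')^{-1/2+\epsilon}$, which does not converge as a product over primes in $(w,z)$. One must split dyadically by $\|q'r/N\|$ (equivalently, by Farey approximations to $r/N$ at denominator $q'$) so that the $T(q')$ savings overwhelm the near-integer resonances of $S(\beta)$ and the total collapses to $O_\epsilon(w^{-1+\epsilon}N)$.
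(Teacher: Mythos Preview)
Your identity
\[
\hat{\nu}(r)\;=\;\frac{\phi(W)\log z}{W}\sum_{q'\mid P}T(q')\sum_{\substack{1\le a'\le q'\\ (a',q')=1}} e(-a'n_0(q')/q')\,S(r/N+a'/q')
\]
is correct, and your treatment of the $q'=1$ term is fine. The gap is in the tail $q'>1$: you do not actually bound it, and the bookkeeping you allude to does not close. With $|S(\beta)|\le\min(N,\|\beta\|^{-1})$, the inner sum over $a'$ contributes at least $N$ from the single $a'$ nearest to $-q'r/N$ (and $\gg q'\log q'$ from the rest). Feeding in only $|T(q')|\ll_\epsilon J^{-1}(q')^{-1+\epsilon}$ and summing over $q'\mid P$, $q'>1$, gives a contribution of size
\[
\frac{\phi(W)\log z}{W}\cdot J^{-1}N\sum_{\substack{q'\mid P\\ q'>1}}(q')^{-1+\epsilon}
\;\asymp\; N\Bigl(\prod_{w<p<z}\bigl(1+p^{-1+\epsilon}\bigr)-1\Bigr)
\;\asymp\; N\cdot\frac{\log z}{\log w},
\]
which is far larger than $w^{-1+\epsilon}N$. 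The $q'\log q'$ piece is worse still, since $\sum_{q'\mid P,\,q'<z^2}(q')^{\epsilon}\log q'$ is enormous. The point is that there are about $z^2$ moduli $q'$ in play, and Lemma~\ref{lem:Tq} alone gives no summable decay; the cancellation you need is in the \emph{signs} of $T(q')$ and the Ramanujan phases, not in their absolute values. Your proposed dyadic split in $\|q'r/N\|$ addresses a single near-resonance, not the divergence of the $q'$-sum itself.

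The paper sidesteps this entirely by a major/minor arc dichotomy on $r/N$ with parameters $Q=\lfloor N^{\delta/4}\rfloor$, $R=\lfloor N^{1-\delta/2}\rfloor$. On the major arc $r/N\approx a/q$ with $q\le Q$, one evaluates $\sum_{n\le x}\nu(n)e_q(an)$ directly (splitting the $[d_1,d_2]$-sum according to whether $q\mid[d_1,d_2]$; see Proposition~\ref{prop:major}) and then applies partial summation in the residual twist $e(\beta n)$. This produces a main term involving \emph{one} factor $T(q)$, not a sum over all $q'\mid P$: for $1<q\le w$ one has $(q,W)>1$ so the main term vanishes, and for $q>w$ the bound $|T(q)|\ll_\epsilon J^{-1}q^{-1+\epsilon}$ gives the $w^{-1+\epsilon}$ saving directly. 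On the minor arc, one forgets $T(q)$ altogether and uses only $|\rho_d|\le 1$ together with the elementary bound
\[
\sum_{m\le M}\Bigl|\sum_{\substack{n\le x\\ n\equiv c_m\,(m)}}e(\alpha n)\Bigr|\ll (M+xq^{-1}+q)\log(2qx)
\]
to get a power saving $\hat{\nu}(r)\ll N^{1-\delta/4}$. The dichotomy is what makes the argument work: your approach tries to treat all $r$ uniformly via the character expansion and is forced to sum $T(q')$ over all $q'\mid P$, which is exactly what the paper avoids.
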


The proof proceeds by considering two cases depending on whether or
not $r/N$ is close to a rational with small denominator. Let
$R=\lfloor N^{1-\delta/2}\rfloor$ and $Q=\lfloor
N^{\delta/4}\rfloor$ be parameters. For $q\leq Q$ and $(a,q)=1$, let
\[
\mathfrak{M}(q,a)=\left\{r\in\Z/N\Z:\left|\frac{r}{N}-\frac{a}{q}\right|\leq\frac{1}{qR}\right\}.
\]
Let
\[ \mathfrak{M}=\bigcup_{q=1}^Q\bigcup_{\substack{a=1\\
(a,q)=1}}^q\mathfrak{M}(q,a),\ \
\mathfrak{m}=\Z/N\Z\setminus\mathfrak{M}. \]

\subsection{Major arc analysis}

In this subsection, we prove Theorem \ref{thm:random} for those
$r\in\mathfrak{M}$. Suppose that $r\in\mathfrak{M}(q,a)$ for some
$q\leq Q$ and $(a,q)=1$. Then $r/N$ is very close to $a/q$. We first
prove a result when they are equal. Recall the quantity $T(q)$
defined in Lemma \ref{lem:Tq}.

\begin{proposition}\label{prop:major}
With the notations as above, for $1\leq x\leq N$,
\[ f(x,a/q)=\sum_{n\leq x}\nu(n)e_q(an)=\frac{\phi(W)}{W}\log z(\varepsilon xT(q)+E(x,q)), \]
where $\varepsilon=\varepsilon(a/q,W,b)$ does not depend on $x$, and
$E(x,q)=O(qN^{1-\delta})$. Moreover, $\varepsilon=1$ if $q=1$,
$\varepsilon=0$ if $(q,W)>1$, and $|\varepsilon|=1$ if $(q,W)=1$.
\end{proposition}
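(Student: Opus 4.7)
The plan is to expand the square in the Selberg majorant and reduce everything to a geometric sum in an arithmetic progression. Writing out the definition of $\nu(n)$, we obtain
$$ f(x, a/q) = \frac{\phi(W)}{W} \log z \sum_{d_1, d_2 \mid P} \rho_{d_1} \rho_{d_2}\, S(x; d_1, d_2), \qquad S(x; d_1, d_2) = \sum_{\substack{n \le x \\ [d_1, d_2] \mid Wn + b}} e_q(an). $$
For each pair I set $D = [d_1, d_2]$. Since $d_i \mid P$ and $(P, W) = 1$, we have $(D, W) = 1$, so the congruence $Wn \equiv -b \pmod D$ pins down a unique $n_0 \bmod D$, namely $n_0 \equiv -b W^{-1} \pmod D$. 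Writing $n = n_0 + Dm$ reduces the inner sum to $e_q(a n_0) \sum_m e_q(aDm)$ over an interval of $m$'s of length $x/D + O(1)$.

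The next step is to evaluate this geometric sum. Because $(a, q) = 1$, we have $q \mid aD$ iff $q \mid D$; in that case every term is $1$ and the sum is $x/D + O(1)$. Otherwise $\|aD/q\| \ge 1/q$, so the standard bound yields $|\sum_m e_q(aDm)| \le 1/(2\|aD/q\|) \le q/2$. Altogether,
$$ S(x; d_1, d_2) = \mathbf{1}_{q \mid D}\cdot \frac{x}{D}\, e_q(a n_0) + O(q). $$

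To identify the main term, I observe that whenever $q \mid D$, the residue $n_0 \bmod q$ does not depend on $D$: it equals $-b W^{-1} \bmod q$. Pulling out the common phase $\varepsilon := e_q(-a b W^{-1})$, the accumulated main term becomes $x \varepsilon \sum_{q \mid [d_1,d_2],\, d_i \mid P} \rho_{d_1}\rho_{d_2}/[d_1,d_2] = x\, \varepsilon\, T(q)$ by the definition of $T(q)$ in Lemma \ref{lem:Tq}. The three declared cases for $\varepsilon$ fall out immediately: at $q = 1$ we get $\varepsilon = e_1(0) = 1$; when $(q, W) > 1$, no $D \mid P$ can be divisible by $q$ (as $P$ is coprime to $W$), so the indicator is identically zero and we may set $\varepsilon = 0$; and when $(q, W) = 1$, $\varepsilon$ is a root of unity, hence $|\varepsilon| = 1$.

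For the error term, I will use $|\rho_d| \le 1$ from \eqref{rhod} and the support condition $d < z = N^{1/2 - \delta}$ from \eqref{support} to sum the $O(q)$ contributions trivially: at most $z^2$ pairs each contribute $O(q)$, giving $E(x,q) = O(q z^2) = O(q N^{1 - 2\delta}) = O(qN^{1-\delta})$. The only step that requires any delicacy is the uniform bound $O(q)$ on the non-main geometric sums; this is essentially automatic from $\|aD/q\| \ge 1/q$ when $q \nmid D$, so I do not foresee a serious obstacle — the argument is, in spirit, routine major-arc bookkeeping once the algebraic identity $\sum_{q\mid D} \rho_{d_1}\rho_{d_2}/D = T(q)$ has been spotted.
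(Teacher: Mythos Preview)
Your proof is correct and follows essentially the same route as the paper's: expand the Selberg square, split according to whether $q\mid [d_1,d_2]$, use the geometric-series bound $O(q)$ for the non-divisible case, and count at most $z^2$ pairs for the error. Your treatment is in fact slightly more explicit than the paper's, since you identify $\varepsilon = e_q(-abW^{-1})$ directly rather than merely asserting that the phase is constant.
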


\begin{proof}
By the definition of $\nu(n)$, we can write
\[
f(x,a/q)=\frac{\phi(W)}{W}\log z\sum_{d_1,d_2\mid
P}\rho_{d_1}\rho_{d_2}\sum_{\substack{n\leq x\\ [d_1,d_2]\mid
Wn+b}}e_q(an).
\]
Split the sum into two parts:
\[ f(x,a/q)=\frac{\phi(W)}{W}\log z(S_1+S_2), \]
where
\[ S_1=\sum_{\substack{d_1,d_2\mid P\\ q\mid [d_1,d_2]}}\rho_{d_1}\rho_{d_2}\sum_{\substack{n\leq x\\ [d_1,d_2]\mid Wn+b}}e_q(an), \]
\[ S_2=\sum_{\substack{d_1,d_2\mid P\\ q\nmid [d_1,d_2]}}\rho_{d_1}\rho_{d_2}\sum_{\substack{n\leq x\\ [d_1,d_2]\mid Wn+b}}e_q(an). \]

First consider $S_1$. For $q\mid [d_1,d_2]$, the inner sum is zero
if $(q,W)>1$. Take $\epsilon=0$ in the case $(q,W)>1$. If $(q,W)=1$,
then the summand in the inner sum is a constant $\epsilon$ with
$|\epsilon|=1$. Moreover, $\epsilon=1$ when $q=1$. In either case,
\[ S_1=\epsilon\sum_{\substack{d_1,d_2\mid P\\ q\mid [d_1,d_2]}}\rho_{d_1}\rho_{d_2}\left(\frac{x}{[d_1,d_2]}+O(1)\right)=\epsilon xT_q+O\left(\left(\sum_{d<z}|\rho_d|\right)^2\right)=\epsilon xT_q+O(N^{1-\delta}) \]
since $|\rho_d|\leq 1$ by \eqref{rhod}.

Now consider $S_2$.  For $d_1,d_2\leq z$ with $(d_1d_2,W)=1$ and $q\nmid [d_1,d_2]$, the inner sum over $n$ is bounded by $q$. Hence
\[ S_2\leq q\left(\sum_{d\leq z}|\rho_d|\right)^2\leq qN^{1-\delta}.
\]
The proof is completed by combining the estimates for $S_1$ and
$S_2$.
\end{proof}

We now use partial summation to complete the major arc estimate. Let $r\in\mathcal{M}(q,a)$ for some $q\leq Q$ and $(a,q)=1$.
Then $r/N=a/q+\beta$ for some $|\beta|\leq 1/qR$. Note that
\[ \hat{\nu}(r)=\sum_{n=1}^N\nu(n)e_q(an)e(\beta n)=\int_1^Ne(\beta x)d\left(\sum_{n\leq x}\nu(n)e_q(an)\right). \]
It follows from Proposition \ref{prop:major} that
\[ \hat{\nu}(r)=\frac{\phi(W)}{W}\log z\left(\epsilon T(q)\int_1^Ne(\beta x)dx+\int_1^Ne(\beta x)dE(x,q)\right). \]

Consider the second integral above. By partial summation, it is
bounded by
\[ E(N,q)+\int_1^NE(x,q)(2\pi i\beta)e(\beta x)dx\ll qN^{1-\delta}+|\beta|qN^{2-\delta}\leq QN^{1-\delta}+\frac{N^{2-\delta}}{R}. \]
This is $O(N^{1-\delta/2})$ by the choices of $Q$ and $R$. Hence
\[ \hat{\nu}(r)=\frac{\phi(W)}{W}\log z\left(\epsilon T(q)\int_1^Ne(\beta x)dx+O(N^{1-\delta/2})\right). \]
If $q>w$, then Theorem \ref{thm:random} follows from Lemma
\ref{lem:Tq} and \eqref{J}. If $1<q\leq w$, then $(q,W)>1$ and thus
$\epsilon=0$. If $q=1$ and $\beta>0$, then $\beta$ is an integer
multiple of $1/N$, and thus the integral above is zero. Finally, if
$q=1$ and $\beta=0$, then $\epsilon=1$. Lemma \ref{lem:Tq} and
\eqref{J} give
\[ \hat{\nu}(0)=\frac{\phi(W)}{W}\log z(J^{-1}+O(N^{-\delta/2}))N=(1+O_W((\log z)^{-1}))N. \]
This proves Theorem \ref{thm:random} for sufficiently large $z$.

\subsection{Minor arc analysis}
Now consider the case when $r\in\mathfrak{m}$. This means that
\[ \left|\frac{r}{N}-\frac{a}{q}\right|\leq\frac{1}{q^2}, \]
for some $Q\leq q\leq R$ and $(a,q)=1$.

By the definition of $\nu(n)$, we can write
\[ \hat{\nu}(r)=\frac{\phi(W)}{W}\log z\sum_{d_1,d_2\mid P}\rho_{d_1}\rho_{d_2}\sum_{\substack{1\leq n\leq N\\ [d_1,d_2]\mid Wn+b}}e_N(rn). \]
Using \eqref{rhod} we obtain
\[ |\hat{\nu}(r)|\leq\frac{\phi(W)}{W}\log z\sum_{\substack{d<
z^2\\ (d,W)=1}}\left(\sum_{\substack{d_1,d_2\\
[d_1,d_2]=d}}1\right)\left(\sum_{\substack{1\leq n\leq N\\
d\mid Wn+b}}e_N(rn)\right). \] For any fixed squarefree $d<z^2$,
there are at most $3^{\omega(d)}\ll d^{\delta/8}$ pairs $(d_1,d_2)$
with $[d_1,d_2]=d$. Hence
\[ |\hat{\nu}(r)|\ll\frac{\phi(W)}{W}(\log
z)N^{\delta/8}\sum_{d<z^2}\left|\sum_{\substack{1\leq n\leq N\\
d\mid Wn+b}}e_N(rn)\right|.
\]

The following lemma estimates this double sum.

\begin{lemma}
Suppose that
\[ \left|\alpha-\frac{a}{q}\right|\leq\frac{1}{q^2} \]
with $(a,q)=1$. For any $1\leq m\leq M$, let $c_m\pmod m$ be an
arbitrary residue class. Then
\[ \sum_{1\leq m\leq M}\left|\sum_{\substack{1\leq n\leq x\\ n\equiv c_m\pmod m}}e(\alpha n)\right|\ll (M+xq^{-1}+q)\log (2qx). \]
\end{lemma}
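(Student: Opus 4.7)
The plan is to reduce the inner sum to a geometric progression and then invoke the classical Vinogradov estimate for sums of the form $\sum_m \min(x/m, \|\alpha m\|^{-1})$.

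For each fixed $m$, parametrize $n = c_m + km$; then $k$ ranges over an interval $I_m$ of integers with $|I_m| \leq x/m + 1$. The inner sum becomes $e(\alpha c_m)\sum_{k \in I_m} e(\alpha m k)$, a geometric progression whose modulus is at most $\min(|I_m|, |1 - e(\alpha m)|^{-1}) \leq \min(x/m + 1,\, 1/(4\|\alpha m\|))$. Summing the ``$+1$'' contribution over $m$ gives at most $M$, which is already present on the right-hand side of the target bound, so it suffices to prove
$$\sum_{m=1}^M \min\!\left(\frac{x}{m},\,\frac{1}{\|\alpha m\|}\right) \ll \left(M + \frac{x}{q} + q\right)\log(2qx).$$

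For this classical estimate, I would partition $\{1, \ldots, M\}$ into $O(M/q + 1)$ blocks of $q$ consecutive integers. Within a block $B = \{m_0+1, \ldots, m_0+q\}$, the hypothesis $(a,q)=1$ implies that $am \pmod{q}$ runs through all residues modulo $q$ exactly once, while $|\alpha - a/q| \leq q^{-2}$ keeps $\|\alpha m\|$ within $m/q^2$ of $\|am/q\|$. Consequently, the multiset $\{\|\alpha m\|\}_{m \in B}$ is, up to a small perturbation, a copy of $\{\|j/q\|\}_{j=0}^{q-1}$. At most one $m \in B$ yields a very small $\|\alpha m\|$; for that exceptional $m$ we switch to the alternative $x/m$ bound from the $\min$, while the other $m \in B$ contribute at most $\sum_{j=1}^{q-1} q/j \ll q\log q$ via the $1/\|\alpha m\|$ bound.

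The main obstacle will be the bookkeeping of the exceptional small-$\|\alpha m\|$ terms across all blocks. Writing the $t$-th block as indexed by $t \geq 0$ with $m \sim tq$, the exceptional terms sum to at most $\sum_{t \geq 0} x/(tq+1) \ll (x/q)\log(2qx)$. Combining this with the $O(q\log q)$ per block times the $O(M/q+1)$ blocks (which contributes $O(M\log q + q\log q)$) produces the claimed bound. A secondary technical point will be to verify that the ``small perturbation'' argument remains valid when $m$ is as large as $M$: since we only need the perturbation $m/q^2$ to be smaller than the spacing $1/q$ between consecutive values $\|j/q\|$, the hypothesis $|\alpha - a/q| \leq 1/q^2$ suffices when $M \leq q^2$, while for $M > q^2$ an easier direct bound (noting that $\|\alpha m\|$ is essentially equidistributed on this longer scale) takes over. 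This entire block estimate is standard, so once the reduction in the first paragraph is carried out, the rest is routine.
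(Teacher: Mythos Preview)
Your proposal is correct and follows the standard route: reduce to a geometric progression and then invoke the classical block estimate
\[
\sum_{m\le M}\min\!\left(\frac{x}{m},\frac{1}{\|\alpha m\|}\right)\ll\left(M+\frac{x}{q}+q\right)\log(2qx).
\]
The paper does not actually prove this lemma at all---it simply cites Lemma~13.7 of Iwaniec--Kowalski---so what you have written is essentially a sketch of that cited argument. One minor imprecision: in your ``secondary technical point'' you say the perturbation $m/q^2$ must be smaller than the spacing $1/q$, which would force $m<q$, not $m<q^2$. In fact no condition on $M$ is needed: within a block of $q$ consecutive integers the perturbation $m\beta$ varies by at most $q\cdot q^{-2}=1/q$, so the values $\|\alpha m\|$ are, up to a uniform shift and an $O(1/q)$ wobble, a permutation of $\{\|j/q\|\}_{j=0}^{q-1}$, regardless of how large $m_0$ is. With that correction your block argument goes through cleanly and the case split $M\lessgtr q^2$ is unnecessary.
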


\begin{proof}
See Lemma 13.7 in \cite{iwaniec}.
\end{proof}

It follows that
\[ |\hat{\nu}(r)|\ll\frac{\phi(W)}{W}(\log
z)N^{\delta/8}(z^2+NQ^{-1}+R)\log N\ll N^{1-\delta/4}, \] completing
the proof of Theorem \ref{thm:random} in the minor arc case.


\section{Proof of Vinogradov's Theorem}\label{sec:proof}

In this section, we use Theorem \ref{thm:2/5} with Selberg's
majorant considered in Sections \ref{sec:selberg} and
\ref{sec:random} to give a proof of Vinogradov's three primes
theorem without using the theory of $L$-functions. In particular, we
will not need Siegel-Walfisz theorem, although we still use the
prime number theorem in arithmetic progressions with constant
modulus, which can be proved elementarily.

Let $M$ be a sufficiently large odd positive integer. We will prove
that $M$ can be written as sum of three primes. Take $\delta=0.01$
in the statement of Theorem \ref{thm:2/5}. Let $W=P(w)$ be a
parameter to be chosen later. Choose $0<b_1,b_2,b_3<W$ with
$(b_i,W)=1$ such that $b_1+b_2+b_3\equiv M\pmod W$ (this can always
be done by the Chinese Remainder Theorem).  Let
$N=(M-b_1-b_2-b_3)/W$. Let $N_3=N$ and $N_1=N_2=\lfloor N/2\rfloor$.
For $i=1,2,3$, define a function $a_i:[1,N_i]\rightarrow\R$ by
\[ a_i(n)=\begin{cases} \frac{\phi(W)}{W}\log z_i & Wn+b_i\text{ is prime and }Wn+b_i\geq z_i \\ 0 & \text{otherwise,} \end{cases} \]
where $z_i=N_i^{0.49}$. Construct $\nu_i=\nu(N_i,z_i,W,b_i)$ as in
Section \ref{sec:selberg}.

The majorization condition is satisfied by the observation
\eqref{major}. The mean condition is satisfied because the average
of $a_i$ is at least $0.48$ for sufficiently large $N$ by the prime
number theorem. The pseudorandomness condition is satisfied by
Theorem \ref{thm:random}, if $w$ is chosen large enough.

Now consider the regularity condition for $a_1$. Write
$\beta=\delta/50$, $y=\beta^{-1}$, $Y=P(y)$, and let
\[ M=\{(u,v):u\leq\beta N_1,v\geq (1-\beta)N_1,(v-u,Y)=1\}.
\]
Also write
\[ U=\{1\leq u\leq\beta N_1:Wu+b_1\text{ is prime}\},\ \
V=\{(1-\beta)N_1\leq v\leq N_1:Wv+b_1\text{ is prime}\}. \]
\begin{align*}
\sum_{(u,v)\in M}a_1(u)a_1(v) &=\left(\frac{\phi(W)}{W}\log z_1\right)^2\sum_{\substack{u\in U,v\in V\\ (v-u,Y)=1}}1 \\
&\geq \left(\frac{\phi(W)}{W}\log z_1\right)^2\sum_{\substack{s_1,s_2\pmod Y\\ (s_2-s_1,Y)=1}}|U\cap (Y\Z+s_1)|\cdot |V\cap (Y\Z+s_2)| \\
&\geq \left(\frac{\phi(W)}{W}\log
z_1\right)^2Y\phi(Y)\left(\frac{\beta N_1}{2\log
N_1}\cdot\frac{W}{\phi(W)}\cdot\frac{1}{Y}\right)^2\geq\kappa N^2
\end{align*}
for some $\kappa$ depending only on $\delta$.

Finally, the $L^q$ extension estimate for $a_i$ follows from the
result of Green \cite{green_restriction}.

\begin{lemma}\label{lem:lq}
For any $q>2$,
\[ \left(\int_0^1|\hat{a}_i(\theta)|^qd\theta\right)^{1/q}\ll_q N^{1-1/q}. \]
\end{lemma}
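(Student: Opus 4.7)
The plan is to invoke Green's restriction theorem for the primes from \cite{green_restriction}, whose hypotheses are exactly what we have constructed. By the majorization property \eqref{major}, $0 \leq a_i \leq \nu_i$ pointwise; and by Theorem \ref{thm:random}, $\nu_i$ is $O_\epsilon(w^{-1+\epsilon})$-pseudorandom. These are precisely the inputs needed for Green's Tomas-Stein-type restriction estimate, so in principle the lemma follows by direct citation.

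The key technical ingredient behind Green's theorem is a high even moment bound of the form
\[ \int_0^1 |\hat{\nu}_i(\theta)|^{2k}\, d\theta \ll_k N_i^{2k-1} \qquad (k = 1, 2, \ldots). \]
Expanding the $2k$-th power, this integral equals the $\nu_i$-weighted count of $2k$-tuples $(n_1, \ldots, n_k, m_1, \ldots, m_k) \in [1, N_i]^{2k}$ satisfying $n_1 + \cdots + n_k = m_1 + \cdots + m_k$, and the pseudorandomness of $\nu_i$ reduces it, up to an acceptable error, to the corresponding unweighted count, which is $\asymp N_i^{2k-1}$ by a standard additive count. Since all the terms are nonnegative and $a_i \leq \nu_i$ pointwise, the same bound holds with $\hat{a}_i$ in place of $\hat{\nu}_i$; that is, $\|\hat{a}_i\|_{2k} \ll_k N_i^{1 - 1/(2k)}$.

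The desired $L^q$ extension estimate then follows for each $q > 2$ by interpolation. Indeed, for any $q > 2$, I would choose an integer $k$ with $2k > q$ and interpolate between $L^{2k}$ and $L^\infty$:
\[ \|\hat{a}_i\|_q \leq \|\hat{a}_i\|_\infty^{1 - 2k/q} \|\hat{a}_i\|_{2k}^{2k/q} \ll_q N_i^{1 - 2k/q} \cdot \bigl(N_i^{1 - 1/(2k)}\bigr)^{2k/q} = N_i^{1 - 1/q}, \]
where I have used the trivial bound $\|\hat{a}_i\|_\infty \leq \hat{a}_i(0) = \sum_n a_i(n) \ll N_i$. Since $N_i \asymp N$, this gives the claim.

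The main obstacle is the high-moment bound on $\hat{\nu}_i$: one must transform the pointwise pseudorandomness from Theorem \ref{thm:random} into a bound on the Fourier $L^{2k}$ norm at all frequencies, not only at frequencies of the form $r/N$. For the Selberg majorant, this is carried out in \cite{green_restriction} via an explicit calculation involving the weights $\rho_d$, the structure of which parallels the computations already performed in Section \ref{sec:selberg} (in particular, the treatment of $J(q,r)$ and $T(q)$ in Lemmas \ref{lem:Jqr} and \ref{lem:Tq}). Thus no fundamentally new techniques are required beyond those already developed in this paper, which is why it is sufficient to defer to \cite{green_restriction} for the detailed verification.
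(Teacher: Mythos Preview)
Your top-level approach---defer to the restriction theorem of \cite{green_restriction}---is the same as the paper's, but the paper applies it differently and more directly.  Rather than feeding the majorant $\nu_i$ and its pseudorandomness into an abstract Tomas--Stein machine, the paper simply sets $h(\theta)=\sum_{n\le N_i,\ F(n)\ge z_i,\ F(n)\ \text{prime}}e(n\theta)$ for the linear form $F(n)=Wn+b_i$, invokes Theorem~1.1 of \cite{green_restriction} to obtain $\|h\|_q\ll_q \mathfrak{G}_F\,N_i^{1-1/q}(\log N_i)^{-1}$, computes $\mathfrak{G}_F=W/\phi(W)$, and then multiplies by the normalising factor since $\hat a_i=(\phi(W)/W)\log z_i\cdot h$.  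The Selberg majorant $\nu_i$ and Theorem~\ref{thm:random} play no role in this step; Green's theorem for the prime exponential sum already contains the enveloping-sieve work internally.

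Your sketch of \emph{why} Green's theorem holds has a genuine error in the interpolation.  The inequality $\|\hat a_i\|_q\le \|\hat a_i\|_\infty^{1-2k/q}\|\hat a_i\|_{2k}^{2k/q}$ is only valid when $q\ge 2k$ (so that the exponent $1-2k/q$ is nonnegative); with your choice $2k>q$ the exponent is negative and the inequality is false.  Reversing the direction ($2k<q$) does give the stated bound for $q>4$, but then the range $2<q\le 4$ needed for Theorem~\ref{thm:2/5} is not covered, and interpolating instead against the $L^2$ endpoint incurs a logarithmic loss since $\|\hat a_i\|_2^2=\sum_n a_i(n)^2\asymp N\log N$.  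The actual mechanism in \cite{green_restriction} is a Tomas--Stein $TT^*$ iteration (using that $\nu_i*\tilde\nu_i$ has controlled Fourier coefficients), not a bare even-moment-plus-interpolation argument; this is what lets one reach every $q>2$ without the $L^2$ endpoint.  Since you ultimately defer to the reference anyway, the lemma stands, but the heuristic you offer for it should be corrected.
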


\begin{proof}
Consider the linear function $F(n)=Wn+b_i$ and the exponential sum
\[ h(\theta)=\sum_{\substack{n\leq N_i\\ F(n)\geq z_i\\ F(n)\text{
prime}}}e(n\theta). \] The argument leading to Theorem 1.1 in
\cite{green_restriction} gives
\[ \|h\|_q\ll_q\mathfrak{G}_FN_i^{1-1/q}(\log N_i)^{-1}, \]
where the singular series $\mathfrak{G}_F$ is defined by
\[ \mathfrak{G}_F=\prod_{p\text{
prime}}\gamma(p)\left(1-\frac{1}{p}\right)^{-1} \] and
\[ \gamma(p)=p^{-1}|\{n\in\Z/p\Z:(p,F(n))=1\}|. \]
(See (1.2) and (1.7) in \cite{green_restriction}). In the current
case, $\gamma(p)=1$ for $p\leq w$ and $\gamma(p)=1-1/p$ for $p>w$.
Hence
\[ \mathfrak{G}_F=\prod_{p\leq w}\frac{p}{p-1}=\frac{W}{\phi(W)}. \]
Finally, note that
\[ \hat{a}_i(\theta)=\left(\frac{\phi(W)}{W}\log
z_i\right)h(\theta). \] It follows that
\[ \|\hat{a}_i\|_q\leq\left(\frac{\phi(W)}{W}\log z_i\right)\|h\|_q\ll_q
N_i^{1-1/q}. \]

\end{proof}

\begin{remark}\label{rem:Lq}
Lemma \ref{lem:lq} was also proved in \cite{green_roth}, using the
Brun sieve and the Siegel-Walfisz theorem. Bourgain \cite{bourgain}
showed how to obtain bounds for $\|\hat{f}\|_q$, where $f$ is a
function supported on the primes. The proof in
\cite{green_restriction} differs from these previous arguments, and
solely depends on properties of an enveloping sieve; in particular
the theory of $L$-functions is not used.
\end{remark}

Now that all hypotheses in the statement of Theorem \ref{thm:2/5}
are verified, we conclude that there exists $n_i\in [1,N_i]$ with
$a_i(n_i)>0$ such that $N=n_1+n_2+n_3$. In particular, $Wn_i+b_i$ is
prime and
\[ M=WN+b_1+b_2+b_3=(Wn_1+b_1)+(Wn_2+b_2)+(Wn_3+b_3), \]
proving that $M$ is the sum of three primes.

We make a final remark concerning the explicit bound for $M$ that
can be produced from our method. As mentioned in the introduction,
directly following our arguments gives $M\geq\exp(\exp(\exp(C)))$
for some reasonable constant $C$. This can be seen as follows. For
our choice of $\delta$, the transference principle Theorem
\ref{thm:2/5} requires the parameter $\eta$ to be exponential in
$1/\delta$. Thus by the pseudorandomness estimate Theorem
\ref{thm:random}, the parameter $w$ should be taken to be
exponential in $1/\delta$. Hence $W$, being the product of primes up
to $w$, becomes double exponential in $1/\delta$. Finally, in the
arguments in this section we used lower bounds on the number of
primes up to $M$ in congruence classes modulo $W$. Such lower bounds
are only available when $M$ is exponential in $W$, and thus triple
exponential in $1/\delta$.

\bibliographystyle{plain}
\bibliography{vinogradov}{}

\end{document}